\documentclass[letterpaper,11pt]{amsart}
\usepackage[margin=1.2in]{geometry}
\usepackage{amsmath,amsthm,amssymb}
\usepackage{xspace,xcolor}
\usepackage[breaklinks,colorlinks,citecolor=teal,linkcolor=teal,urlcolor=teal,pagebackref,hyperindex]{hyperref}
\usepackage[alphabetic]{amsrefs}
\usepackage[all]{xy}
\usepackage[english]{babel}
\usepackage{enumitem}
\usepackage{tikz,xcolor}
\usepackage{tikz-cd}
\usepackage{mathrsfs}
\usepackage{color}

\usepackage{mathtools}

\newcounter{intro}

\newtheorem{intro-conjecture}[intro]{Conjecture}
\newtheorem{intro-corollary}[intro]{Corollary}
\newtheorem{intro-theorem}[intro]{Theorem}

\newcommand{\theoremref}[1]{\hyperref[#1]{Theorem~\ref*{#1}}}
\newcommand{\sectionref}[1]{\hyperref[#1]{Section~\ref*{#1}}}
\newcommand{\lemmaref}[1]{\hyperref[#1]{Lemma~\ref*{#1}}}
\newcommand{\definitionref}[1]{\hyperref[#1]{Definition~\ref*{#1}}}
\newcommand{\propositionref}[1]{\hyperref[#1]{Proposition~\ref*{#1}}}
\newcommand{\conjectureref}[1]{\hyperref[#1]{Conjecture~\ref*{#1}}}
\newcommand{\corollaryref}[1]{\hyperref[#1]{Corollary~\ref*{#1}}}
\newcommand{\exampleref}[1]{\hyperref[#1]{Example~\ref*{#1}}}
\newcommand{\remarkref}[1]{\hyperref[#1]{Remark~\ref*{#1}}}
\newcommand{\itemref}[1]{\hyperref[#1]{Item~\ref*{#1}}}
\newcommand{\equationref}[1]{\hyperref[#1]{Equation~(\ref*{#1})}}
\newcommand{\formularef}[1]{\hyperref[#1]{Formula~(\ref*{#1})}}
\newcommand{\conditionref}[1]{\hyperref[#1]{Condition~(\ref*{#1})}}

\theoremstyle{plain}
\newtheorem{thm}{Theorem}[section]

\newtheorem{lem}[thm]{Lemma}
\newtheorem{prop}[thm]{Proposition}
\newtheorem{cor}[thm]{Corollary}

\theoremstyle{definition}
\newtheorem{defi}[thm]{Definition}

\newtheorem{eg}[thm]{Example}

\newtheorem{question}[thm]{Question}

\theoremstyle{remark}
\newtheorem{rmk}[thm]{Remark}

\def\Mustata{Mus\-ta\-\c{t}\u{a}\xspace}

\def\Z{{\mathbf Z}}
\def\Q{{\mathbf Q}}

\def\C{{\mathbf C}}

\def\cC{\mathcal{C}}
\def\cD{\mathcal{D}}
\def\cE{\mathcal{E}}

\def\cH{\mathcal{H}}

\def\cM{\mathcal{M}}
\def\cN{\mathcal{N}}
\def\cO{\mathcal{O}}

\def\cT{\mathcal{T}}

\def\.{\cdot}
\def\^{\widehat}

\def\de{\partial}

\def\({\left(}
\def\){\right)}

\renewcommand{\and}{ \ \ \text{ and } \ \ }

\begin{document}

\author[Q.~Chen]{Qianyu Chen}

\address{Department of Mathematics, University of Michigan, 530 Church Street, Ann Arbor, MI 48109, USA}
\address{Institute of Geometry and Physics, University of Science and Technology of China, 96
Jinzhai Road, Baohe District, Hefei, China}

\email{qyc@umich.edu, qianyu.chen16@gmail.com}

\author[B.~Dirks]{Bradley Dirks}

\address{Department of Mathematics, Stony Brook University, Stony Brook, NY 11794-3651, USA}

\email{bradley.dirks@stonybrook.edu}

\author[S.~Olano]{Sebasti\'{a}n Olano}

\address{Department of Mathematics, University of Toronto, 40 St. George St., Toronto, Ontario Canada, M5S 2E4}

\email{seolano@math.toronto.edu}

\thanks{Q.C. was partially supported by NSF grant DMS-1952399 and an AMS-Simons travel grant. B.D. was partially supported by the National Science Foundation under Grant No. DMS-1926686 and MSPRF DMS-2303070.}

\title[Filtrations on Local Cohomology and Injectivity Theorems]{Filtrations on Local Cohomology and Injectivity Theorems}

\begin{abstract} We prove that the Hodge filtration on local cohomology is contained in the symbolic Ext filtration, and hence in the usual Ext filtration, answering a question of Mustaţă and Popa. Our main observation is that the Ext filtration admits a filtered-duality interpretation via Hartshorne's algebraic de Rham complex and its infinitesimal filtration. Comparing this construction with the filtered Du Bois complex yields the containment. The same mechanism proves the higher injectivity conjecture of Popa--Shen--Vo and several variants.
\end{abstract}

\maketitle
\section{Introduction}
Let $X$ be a closed subvariety of a smooth complex algebraic variety $Y$. Saito's theory of mixed Hodge modules endows the local cohomology modules $\cH^q_X(\cO_Y)$ with an exhaustive \emph{Hodge} filtration $F_\bullet \cH^q_X(\cO_Y)$ by $\cO_Y$-coherent submodules. On the other hand, the local cohomology sheaf also has the direct-limit presentation:
\[ \cH^q_X(\cO_Y) = \varinjlim \cE xt^q_{\cO_Y}(\cO_Y/I^{p+1},\cO_Y) = \varinjlim \cE xt^q_{\cO_Y}(\cO_Y/I^{(p+1)},\cO_Y),\]
where $I \subseteq \cO_Y$ is the ideal sheaf defining $X$ in $Y$. Here $I^{(p+1)}$ denotes its $(p+1)$-st symbolic power, and both direct limits are taken over $p \geq 0$. Following \cite{MP3}, the images of the corresponding Ext sheaves define the Ext filtrations $E_\bullet^{\rm symb} \cH^q_X(\cO_Y) \subseteq E_\bullet \cH^q_X(\cO_Y)$, which are also exhaustive filtrations by $\cO_Y$-coherent submodules. 

The Hodge filtration is transcendental in nature, while the Ext filtration is algebraic. Motivated by the hypersurface case, \Mustata and Popa \cite{MP3}*{Question 3.5} asked whether we have containment $F_p \cH^q_X(\cO_Y) \subseteq E_p \cH^q_X(\cO_Y)$ for all $p$ and $q$ in $\Z_{\geq 0}$. This containment follows from Saito's construction when $q = \dim Y - \dim X$, as observed by \Mustata and Popa. Moreover, for $p = 0$, they showed by birational methods that the containment holds.

We fully answer this question by proving a stronger result. We show not only that the Hodge filtration is contained in the Ext filtration, but that each Hodge piece factors through the corresponding (symbolic) Ext sheaf.

\begin{intro-theorem} \label{thm-HodgeInExt} Let $X$ be a closed subvariety of a smooth variety $Y$, defined by the ideal sheaf $I\subseteq \cO_Y$. Then for all $p,q \in \Z_{\geq 0}$, we have the commutative diagram
\[ \begin{tikzcd} {} & \cE xt^q_{\cO_Y}(\cO_Y/I^{(p+1)},\cO_Y) \ar[r] \ar[d] & \cE xt^q_{\cO_Y}(\cO_Y/I^{p+1},\cO_Y) \ar[dl]\\ F_p \cH^q_X(\cO_Y) \ar[r,hook] \ar[ur,hook] & \cH^q_X(\cO_Y) & {} \end{tikzcd},\]
and so we have containments
\[ F_\bullet \cH^q_X(\cO_Y) \subseteq E^{\rm symb}_\bullet \cH^q_X(\cO_Y) \subseteq E_\bullet \cH^q_X(\cO_Y).\]

Moreover, the (symbolic) Ext filtration on $\cH^q_X(\cO_Y)$ is compatible with $(\cD_Y,F_\bullet)$.
\end{intro-theorem}

For divisors, the Ext filtration agrees with the pole order filtration. The containment of the Hodge filtration in the pole order filtration has a long history dating back to Griffiths \cite{GriffithsPole} and Deligne--Dimca \cite{DeligneDimca}. 

As explained in \cite{MP3}, the injectivity of the leftmost diagonal arrow in \theoremref{thm-HodgeInExt}, for $p=0$, is equivalent to the Kov\'acs--Schwede injectivity theorem \cite{DBDeform}*{Thm. 3.3}. Interestingly, the proof of this case in \cite{MP3} uses strictness of direct images in Saito's theory of mixed Hodge modules, while the original proof in \cite{DBDeform} uses the $E_1$-degeneration of the Du Bois--de Rham spectral sequence (another incarnation of strictness for proper pushforwards). Our proof also uses a strictness result, but for duality rather than proper pushforwards.

Despite their different definitions, the Hodge and Ext filtrations admit a common interpretation through filtered duality. On the Hodge-theoretic side, the filtered Du Bois complex encodes the Hodge filtration. On the algebraic side, Hartshorne's algebraic de Rham complex \cite{HartshorneDR} carries an infinitesimal filtration coming from the infinitesimal neighborhoods of $X$ \cite{BhattDDR}. Filtered duality provides a bridge between these two descriptions.

More precisely, Bhatt observed that Hartshorne's algebraic de Rham complex, equipped with its infinitesimal filtration, admits a natural filtered morphism to the Du Bois complex \cite{BhattDDR}*{Prop. 5.2}. We also introduce the symbolic infinitesimal filtration and factor Bhatt's morphism through it:
\begin{equation} \label{eq-mainDiagram} (\Omega_{X/\C}^H, {\rm Fil}_{\rm inf}^\bullet) \to (\Omega_{X/\C}^H,{\rm Fil}_{\rm symbinf}^\bullet) \to (\underline{\Omega}_X^\bullet,F^\bullet).\end{equation}

After applying filtered duality, the rightmost object recovers local cohomology with its Hodge filtration, while the first two complexes recover the ordinary and symbolic Ext approximations, respectively. Thus the filtered morphisms above give rise to the comparison maps appearing in \theoremref{thm-HodgeInExt}.

The following result makes the algebraic side of this principle precise. Applying filtered duality to the infinitesimally filtered algebraic de Rham complex gives a complex of filtered $\cD_Y$-modules, which we denote by $(\cE^\bullet(X),F)$. Its underlying complex computes local cohomology, while its filtration records the Ext sheaves.

\begin{intro-theorem} \label{thm-InterpretExt} There is a quasi-isomorphism $\cE^\bullet(X) \simeq R\Gamma_X(\cO_Y)$ in $D^b_{\rm hol}(\cD_Y)$.

Moreover, we have the identifications
\[ \cH^q F_p \cE^\bullet(X) \cong \cE xt^{q}(\cO_Y/I^{p+1},\cO_Y),\]
\[ F_p \cH^q \cE^\bullet(X) \cong E_{p} \cH^q_X(\cO_Y).\]

Hence, the condition 
\[ \cE xt^q_{\cO_Y}(\cO_Y/I^{p+1},\cO_Y) \to \cH^q_X(\cO_Y) \text{ is injective for all } p \leq m \text{ and all } q\in \Z\]
is equivalent to $(\cE^\bullet(X),F_{\min\{\bullet,m\}})$ being a strict filtered complex.
\end{intro-theorem}

Eisenbud--\Mustata--Stillman \cite{EMS}*{Question 6.2} asked for which ideals the injectivity condition of the theorem holds. Thus, the theorem gives a relation between this injectivity problem and strictness for a complex of filtered $\cD_Y$-modules.

\begin{rmk} The first statement in the above theorem is an incarnation of Hartshorne's result \cite{HartshorneDR}*{IV.1.1} showing that the complex $\Omega_{X/\C}^H$, after analytification, is a resolution of the constant sheaf $\C_{X^{\rm an}}$. However, we emphasize that Hartshorne's result is not an input to our proof.
\end{rmk}

The same filtered-duality mechanism that proves \theoremref{thm-HodgeInExt} also gives the following higher injectivity results. In fact, it proves the stronger result, \theoremref{thm-Injectivity2}, which also involves new classes of singularities introduced in \cite{CDOHigher}. The cotangent complex $L_{X/\C}$ arises by applying our method to Bhatt's derived de Rham complex $\widehat{\rm dR}_X$, with its derived Hodge filtration. This filtered differential complex can be placed on the far left side of the diagram \eqref{eq-mainDiagram}, by \cite{BhattDDR}*{Prop. 5.2}.


For $m\in \Z_{\geq -1}$, recall \cite{SVV} that $X$ is \emph{pre-$m$-Du Bois} if the natural map
\[ \cH^0(\underline{\Omega}_X^p) \to \underline{\Omega}_X^p\]
is a quasi-isomorphism for all $p\leq m$, and it is \emph{strict-$m$-Du Bois} if the natural map
\[\Omega_X^p \to \underline{\Omega}_X^p\]
is a quasi-isomorphism for all $p\leq m$. For $m = -1$, both conditions are vacuously true.

\begin{intro-theorem} \label{thm-Injectivity} Let $X$ be a complex algebraic variety and let $k \in \Z_{\geq 0}$.

\begin{enumerate} \item If $X$ is pre-$(k-1)$-Du Bois, then the natural morphism
\[ \cE xt_{\cO_X}^q(\underline{\Omega}_X^k,\omega_X^\bullet) \to \cE xt_{\cO_X}^q(\cH^0(\underline{\Omega}_X^k),\omega_X^\bullet)\]
is injective for all $q\in \Z$.
\item If $X$ is strictly $(k-1)$-Du Bois, then the natural morphism
\[ \cE xt_{\cO_X}^q(\underline{\Omega}_X^k,\omega_X^\bullet) \to \cE xt_{\cO_X}^q(\Omega_X^k,\omega_X^\bullet)\]
is injective for all $q\in \Z$.





\item If the natural morphism
\[ \bigwedge^p L_{X/\C} \to \underline{\Omega}_X^p\]
is a quasi-isomorphism for all $p < k$, then the natural morphism
\[ \cE xt^q_{\cO_X}(\underline{\Omega}_X^k,\omega_X^\bullet) \to \cE xt^q_{\cO_X}(\bigwedge^k L_{X/\C}, \omega_X^\bullet)\]
is injective for all $q\in \Z$.
\end{enumerate}
\end{intro-theorem}

The first statement was proved in \cite{PSV}*{Theorem D} for isolated singularities and conjectured to hold in general. The non-isolated case of the first statement was established by Kov\'acs \cite{KovacsInjectivity}*{Theorem 1.1} using a different method, closer in spirit to the original proof of the $p=0$ case. The second statement was proved for varieties with LCI singularities in \cite{MPDB}*{Theorem A}. For $k > 1$, the assumption in the last statement implies, in particular, that $L_{X/\C} \simeq \Omega_X^1$, which is quite restrictive. However, for $k = 1$, we have the useful restatement, with interpretations related to the local deformation theory of $X$.

\begin{intro-corollary} \label{cor-InjectiveCotangent} If $X$ has Du Bois singularities, then the natural morphism
\[ \cE xt^q_{\cO_X}(\underline{\Omega}_X^1,\omega_X^\bullet) \to \cE xt^q_{\cO_X}(L_{X/\C},\omega_X^\bullet)\]
is injective for all $q\in \Z$.

In particular, if $X$ has Gorenstein Du Bois singularities, we get a canonical inclusion
\[ \cE xt^1_{\cO_X}(\underline{\Omega}_X^1, \cO_X) \hookrightarrow \cT^1_X,\]
where the right-hand side is the local first-order deformation sheaf.
\end{intro-corollary}

If $X$ has isolated singularities, and if $f \colon \widetilde{X} \to X$ is a resolution of singularities which is an isomorphism over $X\setminus X_{\rm sing}$, then the Du Bois complex admits a description in terms of the resolution by \cite{PSMHS}*{Example 7.25}. So we get the following version of the last statement in \corollaryref{cor-InjectiveCotangent}.

\begin{intro-corollary} \label{cor-InjectiveCotangentBirational} Assume $X$ has isolated, Gorenstein, Du Bois singularities with $f\colon \widetilde{X} \to X$ a resolution as above. Suppose $E = f^{-1}(X_{\rm sing})_{\rm red}$ has simple normal crossings. 
Then we have a natural inclusion for every $x\in X_{\rm sing}$:
\[ R^1 f_* (T_{\widetilde{X}} (-\log E)(E) \otimes_{\cO_{\widetilde{X}}} \omega_{\widetilde{X}/X})_x \hookrightarrow T^1_{X,x}.\]
\end{intro-corollary}

\noindent {\bf Acknowledgments.} We thank Bhargav Bhatt, S\'{a}ndor Kov\'{a}cs, Mircea \Mustata, Sung Gi Park, Mihnea Popa, Christian Schnell and Wanchun Shen for helpful discussions.

\section{Background on Local Cohomology and de Rham complexes} 
\noindent {\bf Notation and Conventions.} We collect here important notation from Section \ref{sec-FiltDiff} which is used throughout the paper.

\begin{enumerate} 
\item  
For operations related to filtered $\cD$-modules, we use \emph{increasing} filtrations with subscript notation. 
For Bhatt's constructions, we retain the decreasing filtrations with superscript notation, but we re-index when using Saito's formalism: the indexing conventions are related throughout this paper by
\[ F_p = F^{-p}.\]

\item In the introduction, we preferred to use left filtered $\cD$-modules rather than right filtered $\cD$-modules for readability. However, in almost all proofs below, we will stick to the right $\cD$-module category. They are related in the following way: for a left $\cD_Y$-module $\cM$, the corresponding right $\cD_Y$-module is $\cM^r = \omega_Y \otimes_{\cO_Y} \cM$ with filtration
\[ F_{\bullet-\dim Y} \cM^r = \omega_Y\otimes_{\cO_Y}(F_\bullet \cM).\]

\item $\mathbf D(-)$, duality for mixed Hodge modules (discussed in \propositionref{prop-strictnessDual});

\item $\mathbb D(-)$, duality for filtered differential complexes (see \definitionref{def-dualityfdc});

\item $\mathbb D^{\rm coh}_X(-) \coloneqq R\cH om_{\cO_X}(-,\omega_X^\bullet)$, Grothendieck duality for $\cO_X$-modules, where $\omega_X^\bullet$ is a normalized  dualizing complex.

\item $\widetilde{\rm DR}(-)$, the filtered differential complex associated with a filtered $\cD$-module (see \remarkref{rmk-LeftDMods});

\item ${\rm DR}^{-1}(-)$, the induced filtered $\cD$-module associated with a filtered differential complex (see \definitionref{defi-DRInverse}).
\end{enumerate}

\subsection{Set-up} \label{subsec-Setup} Throughout, $X$ denotes a complex algebraic variety (not necessarily irreducible).

Locally, fix a closed embedding $\iota \colon X \hookrightarrow Y$ with $Y$ a smooth connected variety. Let $I \subseteq \cO_Y$ denote the coherent sheaf of ideals defining $X$ and denote $c = {\rm codim}_Y(X)$.

\subsection{Ext and Local Cohomology}
We work locally, using the notation from Subsection \ref{subsec-Setup}.

The projection $\cO_Y/I^j \to \cO_Y/I^k$ for any $j \geq k \geq 0$ gives a direct system for any $q\in \Z$:
\[ \cE xt^q_{\cO_Y}(\cO_Y/I^k, \omega_Y) \to \cE xt^q_{\cO_Y}(\cO_Y/I^j,\omega_Y),\]
the direct limit of which is the \emph{local cohomology module} $\cH^q_X(\omega_Y)$. By definition, we have morphisms
\[ \psi^q_k \colon \cE xt^q_{\cO_Y}(\cO_Y/I^k,\omega_Y) \to \cH^q_X(\omega_Y).\]

Following \cite{MP3}, we define the \emph{Ext filtration} on local cohomology by
\[ E_{k-\dim Y} \cH^q_X(\omega_Y) = {\rm im}(\psi^q_{k+1}),\]
where the choice of indexing is made to match that of the Hodge filtration. When working with left $\cD$-modules, we define
\[ E_p \cH^q_X(\cO_Y) = {\rm im}(\cE xt^q_{\cO_Y}(\cO_Y/I^{p+1},\cO_Y) \to \cH^q_X(\cO_Y)),\]
which is again chosen to agree with the indexing conventions for the Hodge filtration.

\begin{rmk} \label{rmk-ExtIsOrder} As shown in \cite{MP3}*{Sec. 7}, for $q = c$, the Ext filtration agrees with the \emph{order filtration}
\[ E_{k-\dim Y} \cH^c_X(\omega_Y) = \{ m \in \cH^c_X(\omega_Y) \mid I^{k+1} \cdot m = 0\},\]
and this interpretation will be quite useful to us in our later arguments.

Moreover, once again for $q=c$, the morphism $\psi^c_{k+1}$ is always injective. Thus, we have an identification
\[ \cE xt^c_{\cO_Y}(\cO_Y/I^{k+1},\omega_Y) \cong E_{k-\dim Y} \cH^c_X(\omega_Y).\]
\end{rmk}

Recall that for any $j\geq 0$ the ordinary powers $I^j$ are always contained in the \emph{symbolic powers} $I^j \subseteq I^{(j)}$. For more details on symbolic powers, we refer the reader to \cite{Eisenbud}*{Sec. 3.9}.

As $Y$ is smooth, the families $\{I^j\}$ and $\{I^{(j)}\}$ are cofinal \cite{HH}*{Thm. 1.1(a)}. In particular, we also have an isomorphism
\[ \varinjlim_j \cE xt^q_{\cO_Y}(\cO_Y/I^{(j)},\omega_Y) \cong \cH^q_X(\omega_Y),\]
and can define, analogously to the filtration $E_\bullet \cH^q_X(\omega_Y)$, the \emph{symbolic Ext filtration} $E_\bullet^{\rm symb} \cH^q_X(\omega_Y)$. The containment $I^{p+1} \subseteq I^{(p+1)}$ gives a factorization
\[ \cE xt^q_{\cO_Y}(\cO_Y/I^{(p+1)},\omega_Y) \to \cE xt^q_{\cO_Y}(\cO_Y/I^{p+1},\omega_Y) \to \cH^q_X(\omega_Y),\]
and so the symbolic Ext filtration is finer than the usual Ext filtration:
\[ E_\bullet^{\rm symb} \cH^q_X(\omega_Y) \subseteq E_\bullet \cH^q_X(\omega_Y).\]

\subsection{Algebraic de Rham complex}
Hartshorne \cite{HartshorneDR} gave an algebraic construction of a complex which computes the Betti cohomology of the associated complex analytic variety. For singular $X$, the usual de Rham complex $\Omega_X^\bullet$ (formed by $\Omega_X^1$ and its exterior powers) need not compute this cohomology.

We work in the local set-up of Subsection \ref{subsec-Setup}; however, these notions glue on $X$ \cite{HartshorneDR}. The main observation is that the completion $\widehat{\cO}_Y = \varprojlim \cO_Y/I^j$ along $X$ admits a $\cD_Y$-module structure. Indeed, let $\alpha = (\alpha_j)_{j \geq 1} \in \widehat{\cO}_Y$ and choose local lifts $g_j\in \cO_Y$ of $\alpha_j  \in \cO_Y/I^j$. Then we define, for any $\theta \in \cT_Y$, the element $\theta \alpha \in \widehat{\cO}_Y$ defined by
\[ (\theta \alpha)_j = {\theta(g_{j+1})} \bmod I^j.\]

This is clearly another compatible system of representatives: by definition, $g_{j+1} - g_{k+1} \in I^{\min\{j+1,k+1\}}$, and so $\theta(g_{j+1}) - \theta(g_{k+1}) \in I^{\min\{j+1,k+1\}-1} = I^{\min\{j,k\}}$. A similar computation shows it is well-defined. Moreover, if one instead views $\widehat{\cO}_Y = \varprojlim \cO_Y/I^{(j)}$, then this computation shows that one obtains the same $\cD_Y$-module structure.

The \emph{algebraic de Rham complex} of $X$ (over $\C$) is, by definition, the de Rham complex for $\widehat{\cO}_Y$ with this $\cD_Y$-module structure, and is denoted $\Omega_{X/\C}^H$. Conventionally, it is placed in degrees $0,\dots, \dim Y$, rather than $-\dim Y,\dots, 0$ (the latter being the convention in usual $\cD$-module theory).

Bhatt \cite{BhattDDR} defines the \emph{infinitesimal filtration} ${\rm Fil}^\bullet_{\rm inf} \Omega_{X/\C}^H$ as the filtration induced by the $I$-adic filtration on $\widehat{\cO}_Y$, and explains \cite{BhattDDR}*{Rmk. 4.8} (see also \cite{BdJ}*{Rmk. 3.7}) that this is independent of the choice of smooth embedding. 

Explicitly, one has
\[ \Omega_{X/\C}^H/ {\rm Fil}_{\rm inf}^p = \left[ \widehat{\cO}_Y/I^p\widehat{\cO}_Y \xrightarrow[]{\nabla} \widehat{\cO}_Y/I^{p-1}\widehat{\cO}_Y \otimes_{\cO_Y} \Omega_Y^1 \xrightarrow[]{\nabla} \widehat{\cO}_Y/I^{p-2}\widehat{\cO}_Y \otimes_{\cO_Y} \Omega_Y^2 \to \cdots \right]\]
\[ \cong \left[ \cO_Y/I^p \xrightarrow[]{\nabla} \cO_Y/I^{p-1} \otimes_{\cO_Y} \Omega_Y^1 \xrightarrow[]{\nabla} \cO_Y/I^{p-2} \otimes_{\cO_Y} \Omega_Y^2 \to \cdots \right],\]
with the convention that $I^p=\cO_Y$ for $p\leq 0$.


Define the \emph{symbolic infinitesimal filtration} ${\rm Fil}_{\rm symbinf}^\bullet$ on $\Omega^H_{X/\C}$ analogously, using the filtration by symbolic powers of $I$ on $\widehat{\cO}_Y$. The containment $I^p \subseteq I^{(p)}$ gives a filtered morphism
\[ (\Omega_{X/\C}^H,{\rm Fil}^\bullet_{\rm inf}) \to(\Omega_{X/\C}^H,{\rm Fil}^\bullet_{\rm symbinf}).\]

\begin{rmk} \label{rmk-MapDuBois} Bhatt \cite{BhattDDR}*{Prop. 5.2} observes that the algebraic de Rham complex, with the infinitesimal filtration, has a natural filtered morphism to $(\Omega_X^\bullet,F)$, the usual complex of K\"{a}hler differentials on $X$. Hence, composing with the natural filtered morphism
\[ (\Omega_X^\bullet,F) \to (\underline{\Omega}_X^\bullet,F),\]
we get a filtered morphism $(\Omega_{X/\C}^H, {\rm Fil}_{\rm inf}^\bullet) \to (\underline{\Omega}_X^\bullet,F)$.

This morphism factors through the symbolic infinitesimal filtration. Indeed, we have the morphisms
\[ \Omega_{X/\C}^H/{\rm Fil}_{\rm inf}^p \to \Omega_{X/\C}^H/{\rm Fil}_{\rm symbinf}^p \to \Omega_X^\bullet/F^p\]
given by
\[ 
\begin{tikzcd} 
 \cO_Y/I^p \ar[r] \ar[d] & \cO_Y/I^{p-1} \otimes_{\cO_Y} \Omega_Y^1 \ar[r] \ar[d] & \cdots \ar[r] & \cO_Y/I \otimes_{\cO_Y} \Omega_Y^{p-1} \ar[d]\ar[r] & 0 \ar[d] \\ 
 \cO_Y/I^{(p)} \ar[r] \ar[d] & \cO_Y/I^{(p-1)} \otimes_{\cO_Y} \Omega_Y^1 \ar[r] \ar[d] & \cdots \ar[r] & \cO_Y/I \otimes_{\cO_Y} \Omega_Y^{p-1} \ar[d] \ar[r] & 0 \ar[d] \\
 \cO_X \ar[r]& \Omega_X^1 \ar[r]  & \cdots \ar[r] & \Omega_X^{p-1} \ar[r] & 0 \end{tikzcd},
\]
where the vertical morphisms from the first row are the obvious surjections and the surjections from the second row to the third factor 
\[ \cO_Y/I^{(p-j)} \otimes_{\cO_Y} \Omega_Y^j \to \cO_Y/I \otimes_{\cO_Y} \Omega_Y^j \to \Omega_X^j.\]

In summary, we have filtered morphisms
\[ (\Omega_{X/\C}^H,{\rm Fil}_{\rm inf}^\bullet) \to (\Omega_{X/\C}^H,{\rm Fil}_{\rm symbinf}^\bullet) \to (\Omega_X^\bullet,F^\bullet) \to (\underline{\Omega}_X^\bullet,F^\bullet).\]
\end{rmk}

\begin{rmk} \label{rmk-computeGrInf} Immediately from the definition, we have isomorphisms
\[ {\rm Gr}_{\rm inf}^p \Omega_{X/\C}^H \cong \left[ I^p/I^{p+1} \to I^{p-1}/I^{p} \otimes_{\cO_Y} \Omega_Y^1 \to \cdots \right],\]
\[ {\rm Gr}_{\rm symbinf}^p \Omega_{X/\C}^H \cong \left[ I^{(p)}/I^{(p+1)} \to I^{(p-1)}/I^{(p)} \otimes_{\cO_Y} \Omega_Y^1 \to \cdots \right].\]

As in Remark \ref{rmk-MapDuBois}, it is easy to check that, for $p \leq \dim Y$, the rightmost cohomology of either complex is $\Omega_X^p$.
\end{rmk}

It is well known that the underlying complex $\Omega_{X/\C}^H$ computes the Betti cohomology for $X$ (this is the main point of \cite{HartshorneDR}*{Ch. 4}). If $X$ is projective, Bhatt in~\cite{BhattDDR} uses this filtered complex to define a containment
\[ F^\bullet_{\rm inf} H^k(X^{an}) \subseteq F^\bullet H^k(X^{an}),\]
where the second term is the Hodge filtration from Deligne's theory.

\subsection{Derived de Rham Complex} We recall Bhatt's construction of the \emph{Hodge-completed derived de Rham complex} of a $\C$-variety $X$. We explain the construction in the affine case $X = {\rm Spec}(R)$. Fix $P_\bullet \to R$ a simplicial polynomial resolution such that each $P_n$ is of finite type over $\C$. The constructions are, up to homotopy, independent of the choices and glue for non-affine $X$. Here $Y = {\rm Spec}(P_0)$ will play the role of our ambient smooth variety.

For any $p\in \Z$, let $L \Omega_{R/\C}^{\leq p}$ denote the total complex of the double complex $\Omega^{\leq p}_{P_\bullet/\C}$. Then define
\[ \widehat{\rm dR}_X = \varprojlim_p L\Omega_{R/\C}^{\leq p},\]
with Hodge filtration ${\rm Fil}_H^\bullet$ defined by
\[ \widehat{\rm dR}_X/ {\rm Fil}_H^{p+1} = L\Omega_{R/\C}^{\leq p}.\]

This satisfies
\[ {\rm Gr}_H^p \widehat{\rm dR}_X \simeq \bigwedge^p L_{X/\C}[-p],\]
where $L_{X/\C}$ is the cotangent complex of $X\to {\rm Spec}(\C)$, and the wedge power is the \emph{derived} wedge power. By construction, $\widehat{\rm dR}_X$ carries a dg-algebra (DGA) structure.

\begin{rmk}\label{rmk-FunctorialSmoothEmbed} By \cite{BhattDDR}*{Rmk. 4.3}, this construction is functorial. In particular, given $X\hookrightarrow Y$ a closed embedding into a smooth variety $Y$, we have an induced filtered morphism
\[ (\widehat{\rm dR}_Y,{\rm Fil}_H^\bullet) \to (\widehat{\rm dR}_X,{\rm Fil}_H^\bullet),\]
and by \cite{BhattDDR}*{Ex. 4.4}, the left object can be canonically identified with $(\Omega_Y^\bullet, F^\bullet)$.
\end{rmk}

Set $S = P_0$, with $I \subseteq S$ the defining ideal of $R$, so that $X$ is a closed subvariety of the smooth variety $Y = {\rm Spec}(S)$.

Explicitly, write
\[ (\Omega^{\leq p}_{P_\bullet/\C})^j = \bigoplus_{ s \leq p, s -\ell = j} \Omega^s_{P_\ell/\C},\]
where the simplicial degree $\ell$ contributes a cohomological degree of $-\ell$, as simplicial resolutions are often homologically indexed: $\cdots \to P_2 \to P_1 \to P_0$. 

The total differential combines the simplicial and de Rham differentials:
\[ d \vert_{\Omega^s_{P_\ell/\C}} = \sum_{i=0}^\ell (-1)^i d_i^* + (-1)^\ell d_{\rm DR},\]
where $d_i \colon P_\ell \to P_{\ell-1}$ is the $i$-th face map of the simplicial object $P_\bullet$.

Passing to the inverse limit gives
\[ \widehat{\rm dR}_X^j = \prod_{s-\ell = j} \Omega_{P_\ell/\C}^s,\]
with the surjection $\widehat{\rm dR}_X^j \to \widehat{\rm dR}_X^j/{\rm Fil}_H^{p+1}$ being given by the natural morphism
\[ \prod_{s-\ell = j} \Omega_{P_\ell/\C}^s \to \bigoplus_{ s \leq p, s - \ell = j} \Omega^s_{P_\ell/\C} .\]

As Bhatt explains \cite{BhattDDR}*{Rmk. 4.2}, $\widehat{\rm dR}_X$ is an $E_\infty$-algebra. Using our explicit representative, we can make the DGA structure precise: it is given by the shuffle product in the simplicial direction and the wedge product structure on $\Omega^\bullet_{P_\ell/\C}$, with the usual Koszul signs. With this structure, the natural morphism
\[ \Omega_{P_0/\C}^\bullet \to \widehat{\rm dR}_X\]
given by including the simplicial degree $0$ part is a morphism of DGAs. This can also be seen from the point of view of functoriality as in Remark \ref{rmk-FunctorialSmoothEmbed}.

By \propositionref{prop-Construction2}, this DGA structure and the morphism from $\Omega_{P_0/\C}^\bullet$ show that $(\widehat{\rm dR}_X, {\rm Fil}^H_\bullet)$ is a \emph{filtered differential complex}, to which we now turn. We switched to increasing filtrations to match the conventions for filtered $\cD$-modules.

\section{Filtered Duality and the Ext Filtration} \label{sec-FiltDiff}
The purpose of this section is to develop the filtered-duality mechanism underlying the main results \theoremref{thm-HodgeInExt} and \theoremref{thm-Injectivity}. We first briefly recall Saito's filtered differential complexes. Then we extend the relevant duality functor to complexes that also include the algebraic and derived de Rham complexes. The key computation in Section \ref{ss-DualityExt} relates the algebraic de Rham complexes to the Ext filtration. Section \ref{ss-HodgeInput} contains the Hodge-theoretic duality inputs from Saito's theory.

\subsection{Filtered Differential Complexes}
Many applications of mixed Hodge modules to algebraic geometry use properties of the graded de Rham functor ${\rm Gr}^F_k {\rm DR}(-)$: for example, it is compatible with proper pushforward and duality. These properties are explained in depth in \cite{SaitoMHP}*{Sec. 2}.

Saito's theory of filtered differential complexes is central to these questions. Let $Y$ be a smooth complex algebraic variety. Given a right $\cD_Y$-module $\cM$, the filtered Spencer complex (which, from now on, we will call the \emph{filtered de Rham complex}, following Saito)
\[ 
\begin{aligned}
    &{\rm DR}(\cM,F) = \\
    &\left[(\cM,F[\dim Y]) \otimes_{\cO_Y} \bigwedge^{\dim Y} \cT_Y \xrightarrow[]{\nabla} (\cM,F[\dim Y-1])  \otimes_{\cO_Y} \bigwedge^{\dim Y- 1} \cT_Y \xrightarrow[]{\nabla} \dots \xrightarrow[]{\nabla} (\cM,F) \right]    
\end{aligned}
\]
does not have filtered $\cO$-linear differentials. Rather, the differentials are \emph{filtered differential morphisms}, in the following sense.

\begin{defi}[Filtered Differential Morphism] 
Let $(L_1,F), (L_2,F)$ be two filtered $\cO_Y$-modules. Then $(\cM_i,F) = ( L_i,F) \otimes_{\cO_Y} (\cD_Y,F)$ is a filtered right $\cD_Y$-module for $i = 1,2$, where $F_\bullet \cM_i$ is the convolution of the filtration $F_\bullet L_i$ and the order filtration $F_\bullet \cD_Y$.

We have a natural morphism
\[
\begin{aligned}
    {\rm DR} \colon \quad {\rm Hom}_{\cD_Y}(\cM_1 ,\cM_2) &\to \quad {\rm Hom}_{\C}(L_1,L_2) \\
    \psi \quad  &\mapsto \left(L_1 \to L_1\otimes 1 \to \cM_1 \xrightarrow[]{\psi} \cM_2 \to \cM_2 \otimes_{\cD_Y}\cO_Y = L_2 \right)
\end{aligned}
\]
which is injective \cite{SaitoMHP}*{Lem. 2.2.2}.

The image of the composition
\[{\rm Hom}_{\cD_Y}((\cM_1,F) ,(\cM_2,F)) \subseteq {\rm Hom}_{\cD_Y}(\cM_1,\cM_2) \xrightarrow[]{\rm DR} {\rm Hom}_{\C}(L_1,L_2)\]
is the set of \emph{filtered differential morphisms} ${\rm Hom}_{\rm Diff}((L_1,F),(L_2,F))$.

Following Saito, when we view ${\rm DR}(\cM,F)$ as a filtered differential complex, we prefer to write it as $\widetilde{\rm DR}(\cM,F)$.
\end{defi}

\begin{rmk} \label{rmk-LeftDMods} If $(\cM,F)$ is a filtered \emph{left} $\cD_Y$-module, then the filtered de Rham complex
\[ 
\begin{aligned}
    &\widetilde{\rm DR}(\cM,F) = \\
    &   \left[(\cM,F) \xrightarrow[]{\nabla} \Omega_Y^1 \otimes_{\cO_Y} (\cM,F[-1]) \xrightarrow[]{\nabla} \cdots \xrightarrow[]{\nabla} \omega_Y \otimes_{\cO_Y} (\cM,F[-\dim Y])  \right] 
\end{aligned}
\]
is also a filtered differential complex (compatibly with the filtered side-changing functor).
\end{rmk}

\begin{rmk} \label{rmk-FiltOLinear} If $\varphi \colon (L_1,F)\to (L_2,F)$ is an $\cO$-linear filtered morphism, then we get a filtered $\cD_Y$-linear morphism $\varphi \otimes {\rm id}_{\cD_Y} \colon \cM_1 \to \cM_2$ which clearly maps to $\varphi$ under ${\rm DR}$. Thus, any filtered $\cO_Y$-linear morphism is automatically a filtered differential morphism.
\end{rmk}

\begin{rmk} Saito gives \cite{SaitoMHP}*{(2.2.4.1), Lem. 2.2.5} an alternative characterization of filtered differential morphisms from which it is easy to see that if $\varphi \in {\rm Hom}_{\rm Diff}((L_1,F),(L_2,F))$, then $\varphi$ is filtered, and the induced morphism ${\rm Gr}^F_p \varphi \colon {\rm Gr}^F_p L_1 \to {\rm Gr}^F_p L_2$ is $\cO_Y$-linear for all $p\in \Z$.
\end{rmk}

A filtered $\cD_Y$-module of the form $(\cM,F) = (L,F) \otimes_{\cO_Y} (\cD_Y,F)$ is called an \emph{induced filtered $\cD_Y$-module}. Induced $\cD$-modules play a key role in defining the duality and pushforward functors for filtered $\cD$-modules \cite{SaitoMHP}*{Sec. 2}. Under the standard bounded-below hypothesis for filtrations of $\cD$-modules, every module admits a canonical resolution by induced filtered $\cD_Y$-modules:
\begin{lem}[\cite{SaitoMHP}*{Lem. 2.1.6}] \label{lem-InducedResolution} Let $(\cM,F)$ be a filtered $\cD_Y$-module such that $F_p \cM = 0$ for $p \ll 0$. There is a canonical resolution
\[ \left((\cM,F[-\bullet]) \otimes_{\cO_Y} \bigwedge^{-\bullet} \cT_Y\right) \otimes_{\cO_Y}(\cD_Y,F) \to (\cM,F)\]
by induced filtered $\cD_Y$-modules. 
\end{lem}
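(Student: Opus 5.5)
The plan is to write down the filtered Spencer resolution explicitly and check filtered exactness by passing to the associated graded, where the complex becomes a Koszul complex. Concretely, in cohomological degree $-k$ the term is $\left((\cM,F[k])\otimes_{\cO_Y}\bigwedge^k\cT_Y\right)\otimes_{\cO_Y}(\cD_Y,F)$. The differential, for right modules, is the coordinate-free formula
\[ m\otimes \xi_1\wedge\cdots\wedge\xi_k\otimes P \mapsto \sum_i (-1)^{i-1}\left( m\xi_i\otimes \xi_1\wedge\cdots\widehat{\xi_i}\cdots\wedge\xi_k\otimes P - m\otimes \xi_1\wedge\cdots\widehat{\xi_i}\cdots\wedge\xi_k\otimes \xi_i P\right) + \sum_{i<j}(\text{bracket terms in } [\xi_i,\xi_j]). \]
The augmentation is $m\otimes P\mapsto mP$.

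First I check that this is well defined and intrinsic. This means the differential is $\cO_Y$-balanced for the appropriate tensor structure, it is right $\cD_Y$-linear, and it squares to zero; this is also what makes the resolution canonical. I also check that it is filtered: the shift $F[k]$ on the $k$-th term is chosen exactly so that both kinds of terms preserve $F$. The term $m\xi_i$ raises the $\cM$-degree by one, compensated by the shift. The term $\xi_iP$ raises the $\cD_Y$-degree by one, also compensated. The bracket terms drop nothing. All of this can be verified in local coordinates $x_1,\dots,x_n$ with $\xi_i=\partial_i$, where the brackets vanish.

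The main step is to show the augmented complex is filtered acyclic, i.e. each $F_p$ of it is exact. I take ${\rm Gr}^F$. Using ${\rm Gr}^F\cD_Y={\rm Sym}\,\cT_Y$, the graded complex is
\[ {\rm Gr}^F\cM\otimes_{\cO_Y}\bigwedge^{\bullet}\cT_Y\otimes_{\cO_Y}{\rm Sym}\,\cT_Y, \]
with differential $\sum_i \pm(\partial_i\otimes 1 - 1\otimes\partial_i)$ acting diagonally. Let $N={\rm Gr}^F\cM$. The automorphism of the ${\rm Sym}\,\cT_Y$-module $N\otimes_{\cO_Y}{\rm Sym}\,\cT_Y$ exchanging the diagonal and the right actions turns this into $N\otimes_{\cO_Y}K_\bullet$. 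Here $K_\bullet$ is the Koszul complex of ${\rm Sym}\,\cT_Y$ on the regular sequence $\partial_1,\dots,\partial_n$, which is an $\cO_Y$-flat resolution of $\cO_Y$. So the graded complex is exact with cohomology $N$ in degree $0$, matched by the augmentation.

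Finally I lift from the graded pieces to the filtered pieces. Since $F_p\cM=0$ for $p\ll0$ and $F_p\cD_Y=0$ for $p<0$, the convolution filtrations on all (finitely many) terms are bounded below uniformly. Induction on $p$, using the short exact sequences $0\to F_{p-1}\to F_p\to{\rm Gr}_p\to0$ and the five lemma, gives exactness of each $F_p$. The filtrations are exhaustive, so the underlying complex is exact too.

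I expect the main obstacle to be the graded computation: getting the signs right and checking that the twisting isomorphism intertwines the diagonal differential with the Koszul differential. This includes confirming that the bracket terms vanish on ${\rm Gr}$. I would first verify it in coordinates and then note that the isomorphism does not depend on the coordinates chosen.
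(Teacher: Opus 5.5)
Your argument is correct. The augmented complex you write down is exactly ${\rm DR}^{-1}\widetilde{\rm DR}(\cM,F)\to(\cM,F)$; canonicity also follows directly from this, since ${\rm DR}^{-1}$ of the de Rham differential is uniquely determined. Its ${\rm Gr}^F$ becomes $N\otimes_{\cO_Y}(K_\bullet\to\cO_Y)$ under the coordinate-free untwisting $n\otimes s\mapsto\sum n s_{(1)}\otimes s_{(2)}$ given by the coproduct of ${\rm Sym}\,\cT_Y$; this map turns the diagonal action into minus the right action and respects the augmentations. The uniform lower bound on the filtrations then lets you lift exactness to every $F_p$.

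The paper gives no proof of this lemma and only quotes Saito. Your reduction to a Koszul complex on the associated graded is the standard argument for it.
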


\begin{defi}[De Rham Inverse Functor, \cite{SaitoMHP}*{Cor. 2.2.6}] \label{defi-DRInverse} For a filtered $\cO$-module $(L,F)$, let
\[ {\rm DR}^{-1}(L,F) = (L,F) \otimes_{\cO_Y} (\cD_Y,F),\]
which is an induced filtered $\cD_Y$-module.

Given a filtered differential morphism
\[ (L_1,F) \xrightarrow[]{\psi} (L_2,F),\]
we define
\[ {\rm DR}^{-1}(\psi) \colon {\rm DR}^{-1}(L_1,F) \to {\rm DR}^{-1}(L_2,F)\]
as the unique filtered $\cD_Y$-linear morphism mapping to $\psi$ under ${\rm DR}(-)$.

The functors $\widetilde{{\rm DR}}$ and ${\rm DR}^{-1}$ extend termwise to filtered differential complexes. The functors are mutually quasi-inverse provided that the filtrations are uniformly bounded-below: we have canonical filtered quasi-isomorphisms
\begin{equation}\label{eq-filtqisDRInvDR} \text{\cite{SaitoMHP}*{(2.2.10.6)}} \quad  {\rm DR}^{-1} \widetilde{\rm DR}(\cM^\bullet,F) \simeq (\cM^\bullet,F),
\end{equation}
and
\begin{equation}\label{eq-filtqisDRDRInv}  \text{\cite{SaitoMHP}*{(2.2.10.7)}} \quad \widetilde{\rm DR}{\rm DR}^{-1} (\cC^\bullet,F) \simeq (\cC^\bullet,F).
\end{equation}
\end{defi}

\subsection{Filtered Duality}
When defining the dual functor for filtered differential complexes, Saito assumes the following finiteness properties:
\[ L \text{ is coherent over } \cO_Y, \quad F_p L = 0 \text{ for all } p \ll 0, \quad F_p L = L \text{ for all } p \gg 0.\]

These assumptions are important to guarantee that the duality functor is truly a ``dual'', in the sense that it squares to give the identity functor and hence gives an equivalence of categories.

We will work in a setting that includes both the algebraic and derived de Rham complex models considered above and the filtered Spencer complexes $\widetilde{\mathrm{DR}}(\mathcal M^\bullet,F)$ appearing in Saito's theory. The construction is functorial. We will only invoke involutivity and compatibility with filtered $\mathcal D_Y$-module duality for objects coming from Saito’s theory.

To define Saito's duality for filtered differential complexes, we fix a resolution $\omega_Y[\dim Y] \to K_Y^\bullet$ by right $\cD_Y$-modules such that $K_Y^j$ is injective over $\cO_Y$ for all $j$. We also assume $K_Y^{j} = 0$ for $j < - \dim Y$ and we give $K_Y^j$ the trivial filtration: ${\rm Gr}^F_\ell K_Y^j = 0$ for $\ell \neq 0$. Moreover, as the injective dimension of $\omega_Y$ over $\cO_Y$ is at most $\dim Y$ \cite{Golovin}, we can, and will, assume that $K_Y^\bullet$ is a bounded complex. This boundedness guarantees convergence of the spectral sequences used below.

Let $(\cC^\bullet,F)$ be a filtered differential complex. For any $j, \ell, p \in \Z$, define
\[ 
F_p \cH om_{\cO_Y}^F(\cC^\ell, K_Y^j) = \left\{ \phi \in \cH om_{\cO_Y}(\cC^\ell, K_Y^j) \mid \phi(F_i \cC^\ell) \subseteq F_{p+i} K_Y^j \text{ for all }i\in \Z \right\}. 
\]

As $K_Y^j$ has the trivial filtration, this can be equivalently defined as $\cH om_{\cO_Y}(\cC^\ell/F_{-p-1},K_Y^j)$. Then we define
\[ \cH om_{\cO_Y}^F(\cC^\ell, K_Y^j) = \bigcup_{p \in \Z} F_p \cH om_{\cO_Y}^F(\cC^\ell, K_Y^j) = \varinjlim_{p} \cH om_{\cO_Y}(\cC^\ell/F_{-p-1},K_Y^j),\]
with filtration defined by $F_p$ on the right-hand side. 

The following condition ensures that the filtration on the dual complex is locally uniformly bounded below. Consider a filtered differential complex $(\cC^\bullet,F)$ such that, locally on $Y$, there exists an integer $b$, independent of the cohomological degree, satisfying
\[ F_b\cC^i=\cC^i \qquad\text{for every }i.\]

The chosen models of the algebraic and derived de Rham complexes satisfy the above hypothesis with $b=0$.

\begin{defi}[Filtered Dual of Filtered Differential Complexes]\label{def-dualityfdc} Given a filtered differential complex $(\cC^\bullet,F)$, the dual $\mathbb D(\cC^\bullet,F)$ is the filtered differential complex given as the total complex of the double complex with $(i,j)$-th  term
\[ \cH om_{\cO_Y}^F(\cC^{-i},K_Y^j).\]

The $j$-direction differential is induced by $K_Y^j \to K_Y^{j+1}$, which is filtered $\cO$-linear, hence a filtered differential morphism.

For the $i$-direction differential, we use the fact that $K_Y^j$ is a filtered $\cD_Y$-module. Indeed, by definition, the differential $(\cC^{-(i+1)},F) \xrightarrow[]{d} (\cC^{-i},F)$ is a filtered differential morphism, hence corresponds to a filtered $\cD_Y$-linear morphism
\[ {\rm DR}^{-1}(d) \colon (\cC^{-(i+1)},F) \otimes_{\cO_Y} (\cD_Y,F) \to (\cC^{-i} ,F) \otimes_{\cO_Y} (\cD_Y,F).\]

On the other hand, we have a natural isomorphism
\[ \cH om_{\cO_Y}^F((\cC^{-i},F), (K_Y^j,F)) \cong \cH om_{\cD_Y}^F((\cC^{-i},F)\otimes_{\cO_Y} ( \cD_Y,F), (K_Y^j,F)),\]
and the $i$-direction differential is defined to be the dotted arrow in the commutative diagram
\[ \begin{tikzcd} \cH om_{\cO_Y}^F(\cC^{-i},K_Y^j) \ar[r,dashed] \ar[d,"\cong"] & \cH om_{\cO_Y}^F(\cC^{-(i+1)},K_Y^j) \ar[d,"\cong"] \\  \cH om_{\cD_Y}^F((\cC^{-i},F)\otimes_{\cO_Y} (\cD_Y,F), (K_Y^j,F)) \ar[r,"{\rm DR}^{-1}(d)^*"] & \cH om_{\cD_Y}^F((\cC^{-(i+1)},F) \otimes_{\cO_Y} (\cD_Y,F), (K_Y^j,F)) \end{tikzcd}.\]

See \cite{SaitoDifferentialComplexes}*{Rmk. 2.11} for details.
\end{defi}

Our main tool to compute the filtered pieces of the dual is the double complex spectral sequence:

\begin{rmk} \label{rmk-DoubleComplexConverge} For any fixed $p$, the cohomology of the filtered piece $F_p \mathbb D(\cC^\bullet,F)$ is determined by the double complex spectral sequence
\[ E_2^{a,b} = \cH^a_K \cH^b_{\cC} F_p \cH om_{\cO_Y}^F(\cC^{-\bullet}, K_Y^\bullet) \implies \cH^{a+b} F_p \mathbb D(\cC^\bullet,F),\]
which we can rewrite as
\[ E_2^{a,b} = \cH^a_K \cH^b_{\cC} \cH om_{\cO_Y}(\cC^{-\bullet}/F_{-p-1}, K_Y^{\bullet}) \implies \cH^{a+b} F_p \mathbb D(\cC^\bullet,F).\]

Here, and throughout the paper, $\cH^\bullet_{\cC}$ is the cohomology in the $\cC^\bullet$-direction and $\cH^\bullet_K$ is the cohomology in the $K_Y^\bullet$-direction.

As $K_Y^\bullet$ is a bounded complex, this spectral sequence converges.
\end{rmk}

We conclude this subsection with the following lemma showing that the duality functor for filtered differential complexes interacts well with Grothendieck duality.

\begin{lem} \label{lem-GrDual} Let $(\cC^\bullet,F)$ be a filtered differential complex. Then, for any $p\in \Z$, there is a natural quasi-isomorphism
\[ {\rm Gr}^F_p \mathbb D(\cC^\bullet) \simeq R \cH om_{\cO_Y}({\rm Gr}^F_{-p} \cC^\bullet,\omega_Y[\dim Y]).\]
\end{lem}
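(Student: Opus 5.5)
We compute ${\rm Gr}^F_p$ of the dual directly from Definition~\ref{def-dualityfdc} and identify it with a Hom complex into an injective resolution of $\omega_Y[\dim Y]$. The first step is to observe what happens to the $i$-direction differential after taking graded pieces. By construction,
\[ F_p\cH om^F_{\cO_Y}(\cC^{-i},K_Y^j)=\cH om_{\cO_Y}(\cC^{-i}/F_{-p-1}\cC^{-i},K_Y^j),\]
so there is a natural identification
\[ {\rm Gr}^F_p\cH om^F_{\cO_Y}(\cC^{-i},K_Y^j)\cong \cH om_{\cO_Y}({\rm Gr}^F_{-p}\cC^{-i},K_Y^j). \]
This uses that $K_Y^j$ is $\cO_Y$-injective: applying $\cH om(-,K_Y^j)$ to $0\to {\rm Gr}^F_{-p}\to \cC/F_{-p-1}\to \cC/F_{-p}\to 0$ gives a short exact sequence, and the kernel of restriction is exactly the graded piece.

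The $i$-direction differential is defined through the filtered $\cD_Y$-linear map ${\rm DR}^{-1}(d)$. On graded pieces this becomes ${\rm Gr}^F$ of a filtered differential morphism, which by the remark following Remark~\ref{rmk-FiltOLinear} is $\cO_Y$-linear. I would check that the induced differential on $\cH om_{\cO_Y}({\rm Gr}^F_{-p}\cC^{-i},K_Y^j)$ is simply precomposition with ${\rm Gr}^F_{-p}(d)$. To verify this, unwind the isomorphism $\cH om^F_{\cO}(\cC,K)\cong\cH om^F_{\cD}(\cC\otimes\cD,K)$: a $\phi$ corresponds to $c\otimes P\mapsto \phi(c)P$. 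Write ${\rm DR}^{-1}(d)(c\otimes 1)=\sum_\alpha d_\alpha(c)\otimes\partial^\alpha$. Then the dual differential sends $\phi$ to $c\mapsto\sum_\alpha\phi(d_\alpha(c))\partial^\alpha$. Because $K_Y^j$ has the trivial filtration, $F_\bullet\cD_Y$ acts compatibly with degree shifts, so a term with $|\alpha|=k$ raises the filtration level by $k$ on the $\cC$ side. In ${\rm Gr}^F_p$ only the order-zero term survives, and this term is ${\rm Gr}(d)$. This bookkeeping is the main obstacle, since it requires tracking the convolution filtration and the trivial filtration on $K_Y$ carefully. The sign conventions of the total complex also have to match those of the Hom complex.

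With this in hand, ${\rm Gr}^F_p\mathbb D(\cC^\bullet)$ is the total complex of $\cH om_{\cO_Y}({\rm Gr}^F_{-p}\cC^{-\bullet},K_Y^\bullet)$ with $\cO$-linear differentials, that is, $\cH om^\bullet_{\cO_Y}({\rm Gr}^F_{-p}\cC^\bullet,K_Y^\bullet)$. Since $K_Y^\bullet$ is a bounded complex of injective $\cO_Y$-modules resolving $\omega_Y[\dim Y]$, this complex computes
\[ R\cH om_{\cO_Y}({\rm Gr}^F_{-p}\cC^\bullet,\omega_Y[\dim Y]). \]
Boundedness of $K_Y^\bullet$ makes the product total complex agree with the sum total complex, and it gives the correct derived Hom even though ${\rm Gr}^F_{-p}\cC^\bullet$ may be unbounded.

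Naturality in $(\cC^\bullet,F)$ follows because every identification above is functorial for filtered differential morphisms.
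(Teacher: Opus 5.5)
Your proposal is correct and follows the paper's proof. You identify ${\rm Gr}^F_p$ of each term with $\cH om_{\cO_Y}({\rm Gr}^F_{-p}\cC^{-i},K_Y^j)$, check that the induced $i$-direction differential is the $\cO$-linear dual of ${\rm Gr}^F_{-p}(d)$, and conclude because $K_Y^\bullet$ is a bounded injective resolution of $\omega_Y[\dim Y]$; along the way you spell out the ``computation'' the paper leaves implicit.

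Two wording fixes. First, the filtration index drops rather than rises: filteredness of ${\rm DR}^{-1}(d)$ for the convolution filtration gives $d_\alpha(F_q\cC^{-(i+1)})\subseteq F_{q-|\alpha|}\cC^{-i}$. This is exactly why $\phi(d_\alpha(c))=0$ for $c\in F_{-p}$, $|\alpha|\geq 1$ and $\phi\in F_p$. Second, the kernel of restriction to ${\rm Gr}^F_{-p}\cC^{-i}$ is $F_{p-1}$, not the graded piece. Surjectivity of that restriction, which follows from injectivity of $K_Y^j$, is what identifies the quotient $F_p/F_{p-1}$ with $\cH om_{\cO_Y}({\rm Gr}^F_{-p}\cC^{-i},K_Y^j)$.
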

\begin{proof} By definition, we form the double complex with $(i,j)$-th term
\[ \cH om_{\cO_Y}^F(\cC^{-i},K_Y^j)\]
which satisfies the equality
\[ F_p \cH om_{\cO_Y}^F(\cC^{-i},K_Y^j)  =\cH om_{\cO_Y}(\cC^{-i}/F_{-p-1},K_Y^j).\]

Thus, the double complex defining ${\rm Gr}^F_p\mathbb D(\cC^\bullet)$ has $(i,j)$-term $\cH om_{\cO_Y}({\rm Gr}^F_{-p}\cC^{-i},K_Y^j)$. A computation gives that the $i$-direction differential is the $\cO$-linear dual of the $\cO$-linear morphism ${\rm Gr}^F_{-p} \cC^{-(i+1)} \to {\rm Gr}^F_{-p} \cC^{-i}$. The claim follows by the definition of the Grothendieck duality functor.
\end{proof}

We record the following consequence, which holds even though $\mathbb D$ does not satisfy $\mathbb D^2 \simeq {\rm Id}$.

\begin{cor} \label{cor-DoubleDual} Let $\varphi \colon (\cC^\bullet_1,F) \to (\cC^\bullet_2,F)$ be a morphism between filtered differential complexes. If
\[\cC^\bullet_1/ F_{-p-1} \cC^\bullet_1 \to \cC^\bullet_2/F_{-p-1}\cC^\bullet_2 \text{ is a quasi-isomorphism for all } p \leq m,\]
then
\[F_p \mathbb D(\cC^\bullet_2)\to F_p\mathbb D(\cC^\bullet_1) \text{ is a quasi-isomorphism for all } p \leq m.\]

The converse holds if we assume
\begin{equation} \label{eq-AssumeGrCoh1} {\rm Gr}^F_p \cC^\bullet_i \in D^b_{\rm coh}(\cO_Y) \text{ for } p\in \Z, i\in \{1,2\},\end{equation}
and that there exists $\ell  \in \Z$ such that
\begin{equation} \label{eq-AssumeGrCoh2} F_\ell \cC^\bullet_i = \cC^\bullet_i \text{ for } i \in \{1,2\}.\end{equation}

Under the assumptions \eqref{eq-AssumeGrCoh1} and \eqref{eq-AssumeGrCoh2}, either condition is equivalent to
\[ {\rm Gr}^F_{-p}(\varphi) \text{ is a quasi-isomorphism for all } p \leq m.\]
\end{cor}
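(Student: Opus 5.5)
The forward direction is immediate from the description of filtered pieces used in Remark~\ref{rmk-DoubleComplexConverge}. For each $p$ we have
\[ F_p \mathbb D(\cC_i^\bullet,F) = \mathrm{Tot}\, \cH om_{\cO_Y}(\cC_i^{-\bullet}/F_{-p-1}, K_Y^\bullet), \]
and $\varphi$ induces the map $F_p\mathbb D(\cC_2^\bullet)\to F_p\mathbb D(\cC_1^\bullet)$ by precomposition with $\cC_1^\bullet/F_{-p-1}\to \cC_2^\bullet/F_{-p-1}$. Suppose the latter is a quasi-isomorphism. Each $K_Y^j$ is $\cO_Y$-injective and $K_Y^\bullet$ is bounded, so $\cH om_{\cO_Y}(-,K_Y^\bullet)$ takes quasi-isomorphisms of complexes of $\cO_Y$-modules to quasi-isomorphisms. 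Concretely, I would take the cone $N^\bullet$ of $\cC_1^\bullet/F_{-p-1}\to \cC_2^\bullet/F_{-p-1}$. It is acyclic, so $\cH om(N^{-\bullet},K_Y^j)$ is acyclic in the $i$-direction for each $j$. Boundedness in $j$ then gives acyclicity of the total complex by the spectral sequence of Remark~\ref{rmk-DoubleComplexConverge}. One point needs care: the $i$-differentials are the ones from Definition~\ref{def-dualityfdc}, not naive $\cO$-linear duals. On the quotients $\cC^\bullet/F_{-p-1}$ they are still dual to the induced differentials, and precomposition with $\varphi$ commutes with them because $\varphi$ is a filtered differential morphism. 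So the argument goes through as stated.

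For the converse, I would use Lemma~\ref{lem-GrDual} and argue by induction on $p$. Assume $F_p\mathbb D(\cC_2)\to F_p\mathbb D(\cC_1)$ is a quasi-isomorphism for all $p\le m$.
\begin{enumerate}
\item By \eqref{eq-AssumeGrCoh2}, $F_{-p-1}\cC_i^\bullet=\cC_i^\bullet$ once $-p-1\ge \ell$. So $F_p\mathbb D(\cC_i)=0$ for $p\le -\ell-1$.
\item For $p\le m$, the five lemma on $0\to F_{p-1}\to F_p\to {\rm Gr}^F_p\to 0$ shows ${\rm Gr}^F_p\mathbb D(\varphi)$ is a quasi-isomorphism for all $p\le m$. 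Here both $F_{p-1}$ and $F_p$ maps are quasi-isomorphisms when $p\le m$.
\item By Lemma~\ref{lem-GrDual}, this says $\mathbb D^{\rm coh}_Y({\rm Gr}^F_{-p}\varphi)$ is a quasi-isomorphism, where $\mathbb D^{\rm coh}_Y = R\cH om_{\cO_Y}(-,\omega_Y[\dim Y])$. By \eqref{eq-AssumeGrCoh1}, Grothendieck duality on the smooth variety $Y$ is an anti-equivalence on $D^b_{\rm coh}(\cO_Y)$. Hence ${\rm Gr}^F_{-p}\varphi$ is a quasi-isomorphism for all $p\le m$.
\end{enumerate}

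Finally, the equivalence with ${\rm Gr}^F_{-p}(\varphi)$ being a quasi-isomorphism for all $p\le m$ comes from another induction. The quotient $\cC_i^\bullet/F_{-p-1}$ has a finite filtration whose graded pieces are ${\rm Gr}^F_{-q}\cC_i^\bullet$ for $p\le q\le -\ell-1$ roughly, all of index $\le m$. It is finite because $F_\ell\cC_i=\cC_i$. So ${\rm Gr}$ quasi-isomorphisms for all $p\le m$ imply quotient quasi-isomorphisms for all $p\le m$. Conversely, quotient quasi-isomorphisms for $p$ and $p-1$ give the ${\rm Gr}^F_{-p}$ statement via the exact sequence $0\to{\rm Gr}^F_{-p}\to \cC/F_{-p-1}\to \cC/F_{-p}\to0$. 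I need to be careful with the boundary case $p=m$. There, $\cC/F_{-m}$ corresponds to index $m-1\le m$, so it is covered.

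I expect the main obstacle to be the bookkeeping with the non-$\cO$-linear $i$-differentials. The plan handles it by checking that $\varphi$ commutes with them on each $F_p$; every other step is formal.
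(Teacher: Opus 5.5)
Your converse and your reformulation via ${\rm Gr}^F_{-p}(\varphi)$ follow the paper's route (induction on $p$, \lemmaref{lem-GrDual}, biduality on $D^b_{\rm coh}(\cO_Y)$) and are fine. There is one index slip: the graded pieces of $\cC_i^\bullet/F_{-p-1}$ are ${\rm Gr}^F_{-q}\cC_i^\bullet$ for $-\ell\le q\le p$, not $p\le q\le -\ell-1$.

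The gap is in the forward direction. The quotients $\cC_i^\bullet/F_{-p-1}$ are not complexes of $\cO_Y$-modules, because their differentials are differential operators. So the cone $N^\bullet$ is only an acyclic complex of sheaves of $\C$-vector spaces. The column $\cH om_{\cO_Y}(N^{-\bullet},K_Y^j)$ carries the adjoint differentials of \definitionref{def-dualityfdc}; concretely, it is $\cH om_{\cD_Y}({\rm DR}^{-1}(N)^{-\bullet},K_Y^j)$. Exactness of $\cH om_{\cO_Y}(-,K_Y^j)$ would make this acyclic if ${\rm DR}^{-1}(N)$ were acyclic. Acyclicity of $N$ does not give you that: nothing in the setup makes ${\rm DR}^{-1}$ send quasi-isomorphisms of unfiltered differential complexes to quasi-isomorphisms. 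Saito's formalism only guarantees this for filtered quasi-isomorphisms, i.e.\ quasi-isomorphisms on ${\rm Gr}^F$. Your remark about the $i$-differentials only shows that $\varphi$ induces a map of double complexes; it says nothing about acyclicity. Notice also that your argument, if it worked, would prove the conclusion for each single $p$, whereas the statement is deliberately phrased for all $p\le m$.

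That hypothesis for all $p\le m$ is what repairs the argument.
\begin{enumerate}
\item Apply the five lemma to $0\to{\rm Gr}^F_{-p'}\cC_i^\bullet\to\cC_i^\bullet/F_{-p'-1}\to\cC_i^\bullet/F_{-p'}\to 0$. You already use this in your last paragraph, and it needs no coherence. It shows that ${\rm Gr}^F_{-p'}(\varphi)$ is a quasi-isomorphism for every $p'\le m$.
\item The subcomplexes $F_{p'}\mathbb D(\cC_i^\bullet)$ with $p'\le p$ filter $F_p\mathbb D(\cC_i^\bullet)$. The filtration is finite because of the local uniform bound $F_b\cC_i^\bullet=\cC_i^\bullet$ that the paper imposes on complexes it dualizes.
\item By \lemmaref{lem-GrDual}, the graded pieces are the total complexes of $\cH om_{\cO_Y}({\rm Gr}^F_{-p'}\cC_i^{-\bullet},K_Y^\bullet)$. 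Their $i$-differentials are honest $\cO_Y$-linear duals, so your injectivity argument applies there verbatim.
\item Finally, induct on $p'$ using the five lemma, starting from $F_{p'}\mathbb D(\cC_i^\bullet)=0$ for $p'\le -b-1$.
\end{enumerate}
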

\begin{proof} The first claim follows by construction of the filtered duality functor.

The converse (and equivalent reformulation in terms of ${\rm Gr}^F_{-p}(\varphi)$) follow by induction on $p$, using bi-duality for the usual Grothendieck duality functor and the computation in \lemmaref{lem-GrDual}.
\end{proof}

\subsection{Duality and the Ext Filtration}\label{ss-DualityExt}
The key duality computation is the following.

\begin{prop} \label{prop-DualOfDR} Let $(\cC^\bullet,F) = \widetilde{\rm DR}(\cM,F)[-\dim Y]$ for a filtered left $\cD_Y$-module $(\cM,F)$ as in Remark \ref{rmk-LeftDMods}. Assume $F_p \cM = \cM$ for $p \gg 0$.

Then there is a natural isomorphism
\[ \cH^qF_p {\rm DR}^{-1} \mathbb D(\cC^\bullet,F) \cong \cE xt^{q+\dim Y}_{\cO_Y}(\cM/F_{-p-1},\omega_Y).\]
\end{prop}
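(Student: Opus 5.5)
The plan is to recognize $\mathbb D(\cC^\bullet,F)$ itself as a filtered de Rham complex of an explicit complex of filtered right $\cD_Y$-modules. Then ${\rm DR}^{-1}$ can be removed using \eqref{eq-filtqisDRInvDR}.

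\textbf{Step 1: the candidate right $\cD_Y$-modules.} Set $n=\dim Y$. For each $j$, consider
\[ \cN^j \coloneqq \cH om^F_{\cO_Y}((\cM,F),(K_Y^j,F)) = \varinjlim_p \cH om_{\cO_Y}(\cM/F_{-p-1}\cM,K_Y^j),\]
with $F_p\cN^j = \cH om_{\cO_Y}(\cM/F_{-p-1}\cM,K_Y^j)$. Since $\cM$ is a left $\cD_Y$-module and $K_Y^j$ is a right $\cD_Y$-module, $\cN^j$ carries the standard right $\cD_Y$-module structure
\[ (\phi\theta)(m)=\phi(m)\theta+\phi(\theta m), \qquad \theta\in\cT_Y.\]
The differentials $K_Y^j\to K_Y^{j+1}$ make $\cN^\bullet$ a complex of filtered right $\cD_Y$-modules. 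Griffiths transversality $\theta F_{k}\cM\subseteq F_{k+1}\cM$ gives $F_p\cN^j\cdot F_1\cD_Y\subseteq F_{p+1}\cN^j$, since $K_Y^j$ has the trivial filtration. Because $F_{-p-1}\cM=\cM$ for $-p-1\gg0$, we get $F_p\cN^j=0$ for $p\ll0$, uniformly in $j$. So the bounded-below hypothesis needed for \eqref{eq-filtqisDRInvDR} holds locally.

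\textbf{Step 2: identify $\mathbb D(\cC^\bullet,F)$ with $\widetilde{\rm DR}(\cN^\bullet,F)$.} By Remark~\ref{rmk-LeftDMods}, the term $\cC^{-i}$ is $\Omega_Y^{n-i}\otimes(\cM,F[-(n-i)])$, up to the fixed shift. Since $\Omega^{n-i}_Y$ is locally free with dual $\omega_Y^{-1}\otimes\Omega_Y^{i}$, and $\bigwedge^i\cT_Y\cong \omega_Y^{-1}\otimes\Omega_Y^{n-i}$, I would write the $(i,j)$ term of Definition~\ref{def-dualityfdc} as
\[ \cH om^F_{\cO_Y}(\cC^{-i},K_Y^j)\cong (\cN^j,F[\,\cdot\,])\otimes_{\cO_Y}\textstyle\bigwedge^{i'}\cT_Y\]
for the appropriate index $i'$. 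The filtration shift comes from the $F[-(n-i)]$ twist on $\cC$, and the degree shift $[-\dim Y]$ in the hypothesis is there precisely to make these indices line up with the filtered de Rham complex of a right module.

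The $j$-differentials visibly agree. For the $i$-differentials, I would check on local coordinates that ${\rm DR}^{-1}(\nabla)^*$ equals the Spencer differential of $\cN^j$. The differential $\nabla$ of $\widetilde{\rm DR}(\cM)$ is $\sum dx_k\wedge\partial_k$. Dualizing it through $\cH om_{\cD_Y}({\rm DR}^{-1}(-),K_Y^j)$ produces $\phi\mapsto \phi\partial_k$ plus $\phi\circ\partial_k$. This is exactly the right action on $\cN^j$ defined in Step 1, up to the sign of the Koszul convention. Thus $\mathbb D(\cC^\bullet,F)\cong\widetilde{\rm DR}(\cN^\bullet,F)$, where the right-hand side is the total complex of filtered de Rham complexes.

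\textbf{Step 3: conclude.} Applying \eqref{eq-filtqisDRInvDR} termwise in $j$ and totalizing gives a filtered quasi-isomorphism ${\rm DR}^{-1}\mathbb D(\cC^\bullet,F)\simeq(\cN^\bullet,F)$. The totalization is harmless because $K_Y^\bullet$ is bounded, as in Remark~\ref{rmk-DoubleComplexConverge}. Hence
\[ \cH^qF_p{\rm DR}^{-1}\mathbb D(\cC^\bullet,F)\cong \cH^q\cH om_{\cO_Y}(\cM/F_{-p-1}\cM,K_Y^\bullet).\]
Since $K_Y^\bullet$ is an injective resolution of $\omega_Y[n]$, the right-hand side is $\cE xt^{q+n}_{\cO_Y}(\cM/F_{-p-1},\omega_Y)$. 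Naturality follows because every identification above is functorial in $(\cM,F)$.

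I expect the main obstacle to be Step 2. The hard parts are the bookkeeping of filtration shifts, so that $F_p$ of the dual matches $F_p$ of $\widetilde{\rm DR}(\cN)$ exactly rather than up to a shift, and the sign and term verification that ${\rm DR}^{-1}(\nabla)^*$ is the Spencer differential. For the latter I would reduce to the case of coordinates and check it on generators $m\otimes 1$, using that ${\rm DR}^{-1}(\nabla)$ sends $\omega\otimes m\otimes 1$ to $\sum dx_k\wedge\omega\otimes(\partial_k m\otimes1 - m\otimes\partial_k)$.
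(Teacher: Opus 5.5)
Your proposal is correct and is essentially the paper's proof. You identify the $(i,j)$ term of $\mathbb D(\cC^\bullet,F)$ with $(\cN^j,F[-i])\otimes_{\cO_Y}\bigwedge^{-i}\cT_Y$, check in coordinates that the $\cC$-direction differential is the de Rham differential of the right module $\cN^j$, use ${\rm DR}^{-1}\widetilde{\rm DR}(\cN^j,F)\simeq(\cN^j,F)$, and read off $\cE xt$ from $\cH om_{\cO_Y}(\cM/F_{-p-1},K_Y^\bullet)$. The paper phrases the last step as the collapse to $b=0$ of the spectral sequence $\cH^a_K\cH^b_{\cC}$, which is equivalent to your totalization.

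Two small corrections. First, with the shift $[-\dim Y]$ one has directly $\cC^{-i}=\Omega_Y^{-i}\otimes(\cM,F[i])$, so the filtration bookkeeping you flagged as the main obstacle is immediate and needs no detour through $\omega_Y^{-1}$. Second, balancing over $\cO_Y$ in $L\otimes_{\cO_Y}\cD_Y$ forces ${\rm DR}^{-1}(\nabla)(\omega\otimes m\otimes 1)=\nabla(\omega\otimes m)\otimes 1+\sum_k dx_k\wedge\omega\otimes m\otimes\partial_k$. The plus sign is forced, not a Koszul-convention choice, and it is exactly what produces the action $\phi(\partial_k m)+\phi(m)\partial_k$ of your Step 1.
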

\begin{proof} Note that we have
\[ (\cC^i,F) = \Omega_Y^i \otimes_{\cO_Y} (\cM,F[-i]).\]

Hence, we can write
\[ F_p \cH om_{\cO_Y}^F(\cC^{-i},K_Y^j) = \cH om_{\cO_Y}(\cC^{-i}/F_{-p-1},K_Y^j)\]
and $\cC^{-i}/F_{-p-1} = \Omega_Y^{-i} \otimes_{\cO_Y} (\cM/F_{-p-i-1})$, so we have
\[ F_p \cH om_{\cO_Y}^F(\cC^{-i},K_Y^j) = \cH om_{\cO_Y}(\cM/F_{-p-i-1},K_Y^j) \otimes_{\cO_Y} \bigwedge^{-i} \cT_Y.\]

Recall that $\mathbb D(\cC^\bullet,F)$ is defined as the total complex of the double complex whose $(i,j)$-th term is $\cH om_{\cO_Y}^F(\cC^{-i},K_Y^j)$, and so similarly, $F_p {\rm DR}^{-1}\mathbb D(\cC^\bullet,F)$ is the total complex of the double complex whose $(i,j)$-th term is $F_p {\rm DR}^{-1}\cH om_{\cO_Y}^F(\cC^{-i},K_Y^j)$. We will use the spectral sequence
\[ E_2^{a,b} = \cH^a_{K} \cH^b_{\cC}F_p {\rm DR}^{-1} \cH om_{\cO_Y}^F(\cC^{-\bullet},K_Y^*) \implies \cH^{a+b} F_p {\rm DR}^{-1}\mathbb D(\cC^\bullet,F).\]

Define for each $j\in \Z$ a filtered $\cO$-module $(\cN^j,F)$ as $F_p \cN^j = \cH om_{\cO_Y}(\cM/F_{-p-1},K_Y^j)$, so that $\cN^j = \bigcup_{p\in \Z} F_p \cN^j \subseteq \cH om_{\cO_Y}(\cM, K_Y^j)$. Following the above notation, we could write $\cN^j = \cH om_{\cO_Y}^F(\cM,K_Y^j)$.

Recall that $\cH om_{\cO_Y}(\cM,K_Y^j)$ is naturally a right $\cD_Y$-module by the action $(\psi \theta)(m) = \psi( \theta(m)) + \psi(m)\theta$ for any $\theta \in \cT_Y$. It is easy to check that $\cN^j$ is a right $\cD_Y$-submodule, and that the filtration is compatible with the action of differential operators. Indeed, if $\psi(F_{-p-1}\cM) = 0$, we want to check that $(\psi \theta)(F_{-p-2} \cM) = 0$, too. But given $m\in F_{-p-2} \cM$, we have 
\[
(\psi \theta)(m) = \psi(\theta(m)) + \psi(m)\theta \in \psi(F_{-p-1}\cM) + \psi(F_{-p-2}\cM)\theta = 0.
\]

Importantly, the assumption $F_p \cM = \cM$ for $p \gg 0$ implies that $F_p \cN^j = 0$ for $p \ll 0$.

We can rephrase the $\cC^\bullet$-direction differential as a morphism
\[ (\cN^j,F[-i]) \otimes_{\cO_Y} \bigwedge^{-i} \cT_Y \to ( \cN^j,F[-i-1]) \otimes_{\cO_Y} \bigwedge^{-(i+1)} \cT_Y,\]
and in fact, a computation in local coordinates shows that this morphism agrees with the differential in $\widetilde{\rm DR}(\cN^j,F)$. Thus, when we apply ${\rm DR}^{-1}(-)$, the resulting complex is ${\rm DR}^{-1} \widetilde{\rm DR}(\cN^j,F)$. As explained in \lemmaref{lem-InducedResolution}, this complex is filtered quasi-isomorphic to $(\cN^j,F)$.

Hence, 
\[\cH^b_{\cC} F_p {\rm DR}^{-1} \cH om_{\cO_Y}^F(\cC^{-\bullet},K_Y^*) \cong \begin{cases} 0 & b \neq 0 \\ F_p \cN^* & b =0\end{cases}.\] Using the fact that $F_p \cN^j = \cH om_{\cO_Y}(\cM/F_{-p-1}, K_Y^j)$, with differential induced by that of $K_Y^\bullet$, we see that the complex computing $\cH^a_K \cH^b_{\cC}F_p {\rm DR}^{-1} \cH om_{\cO_Y}^F(\cC^{-\bullet},K_Y^*)$ is\small
\[ \dots \to \cH om_{\cO_Y}(\cM/F_{-p-1},K_Y^{j-1}) \to \cH om_{\cO_Y}(\cM/F_{-p-1},K_Y^{j}) \to \cH om_{\cO_Y}(\cM/F_{-p-1},K_Y^{j+1}) \to \cdots,\]\normalsize
which completes the proof.
\end{proof}

\begin{cor} \label{cor-ConstructExtFilt} The morphism
\[ \cH^q F_p {\rm DR}^{-1} \mathbb D(\Omega_{X/\C}^H, {\rm Fil}^{\rm inf}_\bullet) \to \cH^q {\rm DR}^{-1} \mathbb D(\Omega_{X/\C}^H, {\rm Fil}^{\rm inf}_\bullet)\]
is naturally identified with the morphism
\[ \cE xt^{q+\dim Y}_{\cO_Y}(\cO_Y/I^{p+1},\omega_Y) \to \cH^{q+\dim Y}_X(\omega_Y)\]
for any $p,q \in \Z$. The analogous statement holds with the symbolic infinitesimal filtration.
\end{cor}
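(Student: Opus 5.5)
We will apply \propositionref{prop-DualOfDR} with $\cM = \widehat{\cO}_Y$, the completion of $\cO_Y$ along $X$ with the $\cD_Y$-module structure of Hartshorne, and with the filtration coming from the $I$-adic filtration. The first task is to check that $(\Omega_{X/\C}^H,{\rm Fil}^{\rm inf}_\bullet)$ is of the form $\widetilde{\rm DR}(\cM,F)[-\dim Y]$ as in \remarkref{rmk-LeftDMods}.

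\emph{Setting up the filtration.} In increasing notation put $F_{-p-1}\widehat{\cO}_Y = I^{p+1}\widehat{\cO}_Y$, that is, $F_k \widehat{\cO}_Y = I^{-k}\widehat{\cO}_Y$. Then $F_k\cM = \cM$ for $k \geq 0$, and Griffiths transversality $\cT_Y \cdot F_k \subseteq F_{k+1}$ is exactly the statement that derivations lower $I$-adic order by one. The shift $F[-i]$ on $\Omega^i_Y\otimes \cM$ reproduces the explicit description of $\Omega_{X/\C}^H/{\rm Fil}_{\rm inf}^p$: its $i$-th term is $\cO_Y/I^{p-i}\otimes\Omega_Y^i$. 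I expect this identification to require only careful matching of the index conventions $F_p = F^{-p}$ and of the shift $[-\dim Y]$, which accounts for the placement in degrees $0,\dots,\dim Y$.

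\emph{Computing the filtered pieces.} \propositionref{prop-DualOfDR} then gives
\[
\cH^q F_p{\rm DR}^{-1}\mathbb D \cong \cE xt^{q+\dim Y}_{\cO_Y}(\widehat{\cO}_Y/I^{p+1}\widehat{\cO}_Y,\omega_Y) = \cE xt^{q+\dim Y}_{\cO_Y}(\cO_Y/I^{p+1},\omega_Y).
\]

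\emph{Passing to the union.} Since $K_Y^\bullet$ is bounded and the filtration is exhaustive, $\mathbb D$ is the union of its $F_p$ pieces, and cohomology commutes with filtered colimits. Hence
\[
\cH^q{\rm DR}^{-1}\mathbb D = \varinjlim_p \cH^q F_p{\rm DR}^{-1}\mathbb D.
\]

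\emph{Naturality in $p$.} The main point is that the isomorphisms of \propositionref{prop-DualOfDR} are compatible with the transition maps. The inclusion $F_p\cN^j = \cH om(\cM/F_{-p-1},K^j_Y) \subseteq F_{p+1}\cN^j$ is induced by the surjection $\cM/F_{-p-2}\to\cM/F_{-p-1}$, so it corresponds to the map $\cE xt(\cO_Y/I^{p+1},\omega_Y)\to\cE xt(\cO_Y/I^{p+2},\omega_Y)$ of the direct system. We should also confirm that the spectral sequence identification in that proof is functorial in $p$. This holds because the identification comes from the canonical augmentation ${\rm DR}^{-1}\widetilde{\rm DR}(\cN^j,F)\to(\cN^j,F)$ of \lemmaref{lem-InducedResolution}, which is a map of filtered complexes.

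\emph{Conclusion.} Taking colimits, the target is $\varinjlim_p \cE xt^{q+\dim Y}_{\cO_Y}(\cO_Y/I^{p+1},\omega_Y) = \cH^{q+\dim Y}_X(\omega_Y)$, and the map from the $p$-th piece is $\psi^{q+\dim Y}_{p+1}$. The symbolic case is identical with $I^{(p+1)}$ in place of $I^{p+1}$. It uses that the $\cD_Y$-structure on $\widehat{\cO}_Y$ is the same for both inverse systems, that symbolic powers satisfy $\theta(I^{(k)})\subseteq I^{(k-1)}$, and the cofinality of symbolic and ordinary powers \cite{HH}, which identifies the colimit with local cohomology.
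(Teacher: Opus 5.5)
Your proposal is correct and is essentially the paper's argument. The paper's proof observes that, in the proof of \propositionref{prop-DualOfDR}, the filtered module $(\cN^j,F)$ is $\Gamma_X(K_Y^j)$ with its order filtration $F_p\cN^j=\cH om_{\cO_Y}(\cO_Y/I^{p+1},K_Y^j)$; the whole complex then computes $R\Gamma_X(\omega_Y)[\dim Y]$, the $F_p$-piece computes the Ext sheaf, and the map is induced by inclusion, and your passage to $\varinjlim_p$ with naturality in $p$ (together with the appeal to \cite{HH} for cofinality in the symbolic case) is an expanded version of this same identification.
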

\begin{proof} This follows immediately from the proof of \propositionref{prop-DualOfDR} above, noting that in this case, the module $(\cN^j,F)$ can be identified with $(\Gamma_X(K_Y^j),O)$, where $O_\bullet \Gamma_X(K_Y^j)$ is the order filtration.
\end{proof}

\begin{proof}[Proof of \theoremref{thm-InterpretExt}] Apply the side-changing functor $- \otimes_{\cO_Y} \omega_Y^{-1}$, with the appropriate shift of notation, to the (shifted) complex
\[ {\rm DR}^{-1} \mathbb D(\Omega_{X/\C}^H,{\rm Fil}^{\rm inf}_\bullet)[-\dim Y]\]
of \corollaryref{cor-ConstructExtFilt}. The resulting filtered $\cD_Y$-module complex is $(\cE^\bullet(X),F)$.
\end{proof}

\subsection{Hodge-theoretic input: Strictness and the Du Bois complex}\label{ss-HodgeInput}

Saito \cite{SaitoMHP}*{Sec. 2.4} defines the dual for filtered $\cD$-modules in a way compatible with the above only if $(\cM,F) = (L,F) \otimes_{\cO_Y} (\cD_Y,F)$ for $F_p L = L$ for $p \gg 0$ and $F_p L = 0$ for $p \ll 0$.

For a \emph{coherent} filtered $\cD_Y$-module $(\cM,F)$, \cite{SaitoMHP}*{Prop. 2.1.16} shows that one can always take a resolution by such induced filtered $\cD$-modules, and then one can use the functors defined above to compute the filtered dual of $(\cM,F)$. In fact, this resolution can be chosen locally free of finite rank over $\cD_Y$.

\begin{prop}[\cite{SaitoMHP}*{Lem. 5.1.13} and \cite{SaitoMHM}*{Prop. 2.6}] \label{prop-strictnessDual} Let $(\cM^\bullet,F)$ be a complex of filtered $\cD_Y$-modules underlying a bounded complex of mixed Hodge modules. Then the dual $\mathbf D(\cM^\bullet,F)$ is a bounded-below complex of filtered $\cD$-modules which is strict.
\end{prop}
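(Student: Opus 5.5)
The plan is to compute $\mathbf D(\cM^\bullet,F)$ by Saito's recipe and to deduce strictness from the Hodge-theoretic structure of mixed Hodge modules, not from the filtered complex alone. Since $(\cM^\bullet,F)$ underlies a bounded complex of mixed Hodge modules, each cohomology $\cH^i(\cM^\bullet,F)$ is a coherent filtered $\cD_Y$-module with $F_p=0$ for $p\ll 0$. Moreover, the complex is filtered quasi-isomorphic to the underlying filtered complex of an object of $D^b{\rm MHM}(Y)$, and that underlying complex is strict; this is part of Saito's formalism \cite{SaitoMHM}.

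First I would reduce to a single mixed Hodge module. Take the truncation triangles of the complex. Because it is strict, these triangles are filtered triangles whose graded pieces are the strict complexes ${\rm Gr}^F_p$. The filtered dual $\mathbf D$ is an exact functor on the filtered derived category. It therefore sends these triangles to triangles among the $\mathbf D(\cH^i\cM,F)[-i]$. Strictness is preserved under extensions in filtered triangles, provided the connecting morphisms are compatible with the filtration. Here the connecting morphisms come from morphisms in $D^b{\rm MHM}$, and Saito's theory shows these are strictly compatible. So it suffices to treat a single mixed Hodge module $M$ with underlying filtered module $(\cM,F)$.

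Second, for a single $M$ I would take a resolution of $(\cM,F)$ by induced filtered $\cD$-modules, locally free of finite rank over $\cD_Y$, using \cite{SaitoMHP}*{Prop. 2.1.16}. I would then compute $\mathbf D(\cM,F)$ via $\mathbb D$ and ${\rm DR}^{-1}$ as above. The underlying complex is bounded below, since the resolution is bounded and $K_Y^\bullet$ is bounded. The key input is \cite{SaitoMHP}*{Lem. 5.1.13}: if $(\cM,F)$ is Cohen--Macaulay, meaning ${\rm Gr}^F\cM$ is a Cohen--Macaulay ${\rm Gr}^F\cD_Y$-module of pure dimension $n$, then $\mathbf D(\cM,F)$ is strict with cohomology only in one degree. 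Concretely, ${\rm Gr}^F$ of the dual is $R\cH om_{{\rm Gr}^F\cD_Y}({\rm Gr}^F\cM,{\rm Gr}^F\cD_Y)$, suitably shifted and twisted. A graded module is Cohen--Macaulay exactly when this Ext is concentrated in a single degree. By the usual criterion, concentration of ${\rm Gr}^F$ cohomology in one degree implies that the filtered complex is strict.

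Third, I need that a mixed Hodge module has Cohen--Macaulay ${\rm Gr}^F$. For pure Hodge modules, Saito proves this by induction on the support. He uses the $V$-filtration, nearby and vanishing cycles, and the decomposition of polarizable Hodge modules; in particular, polarization gives $\mathbf D M\cong M(w)$ at the filtered level, which forces the Ext concentration. For mixed Hodge modules I would pass through the weight filtration. Each ${\rm Gr}^W_k M$ is pure, so it is Cohen--Macaulay. Also, $W$ is strictly compatible with $F$, since morphisms of mixed Hodge modules are bistrict. Hence ${\rm Gr}^F\cM$ is an iterated extension of Cohen--Macaulay modules of the same dimension, and such an extension is again Cohen--Macaulay. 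This is \cite{SaitoMHM}*{Prop. 2.6}.

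I expect the main obstacle to be this last step: the Cohen--Macaulay property for pure Hodge modules. It requires the full inductive machinery of Saito's theory; in this paper I would cite it directly rather than reprove it. The reduction from complexes to single modules is comparatively formal, though some care is needed so that the boundedness conventions for $K_Y^\bullet$ and for the induced resolutions match the bounded-below statement.
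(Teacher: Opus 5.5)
Your outline is correct and is essentially the paper's approach. The paper gives no argument beyond citing Saito (\cite{SaitoMHP}*{Lem.~5.1.13} and \cite{SaitoMHM}*{Prop.~2.6}), and your steps (reduction from the complex to its cohomology modules, the Cohen--Macaulay criterion for strictness of the dual of a single module, and the passage from pure to mixed along the bistrict weight filtration) unpack what those citations provide.

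Two small corrections:
\begin{enumerate}
\item In the truncation step, the connecting maps vanish on $\cH^\bullet F_p$ for degree reasons alone. By induction, the duals of $\tau_{\geq i}\cM^\bullet$ and $\tau_{<i}\cM^\bullet$ are strict with cohomology in degrees $\leq -i$ and $\geq -i+1$ respectively. So you should not appeal to compatibility of filtered duality with duality in $D^b{\rm MHM}(Y)$, which is close to the statement being proved.
\item Polarization is not what forces the Ext concentration. In Saito's development, strictness of $\mathbf D(\cM,F)$ for Hodge modules is proved by induction on the support via nearby and vanishing cycles, and the filtered form of a polarization already presupposes it.
\end{enumerate}
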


In our setting, strictness means the following: if $(\cN^\bullet,F)$ is a resolution by locally free induced filtered $\cD_Y$-modules of finite rank   as in the previous paragraph, then the natural morphism
\[ \cH^q F_p \mathbf D(\cN^\bullet,F) \to \cH^q \mathbf D(\cN^\bullet) = \cH^q \mathbf D(\cM^\bullet)\]
is injective, and its image is equal to $F_p \cH^q \mathbf D(\cN^\bullet)$.

Now, if $X\subseteq Y$ is embedded into a smooth variety, we can represent the constant mixed Hodge module object $\Q_X^H \in D^b({\rm MHM}(X))$ as some bounded complex of mixed Hodge modules on $Y$: $\iota_* \Q_X^H = M^\bullet$. Saito shows that this is compatible with the filtered Du Bois complex which allows him to prove compatibility of his theory with Deligne's mixed Hodge theory \cite{SaitoMHC}*{Cor. 4.3}.

\begin{thm}[\cite{SaitoMHC}*{Thm. 4.2}; see also \cite{DBSaito}*{Thm. 1.5}] \label{thm-DuBoisCompare} There is a canonical filtered quasi-isomorphism
\[ \widetilde{\rm DR}(\cM^\bullet,F) \simeq (\underline{\Omega}_X^\bullet, F)\]
in the bounded derived category of filtered differential complexes on $Y$ (defined above). Here $(\cM^\bullet,F)$ is the complex of filtered $\cD_Y$-modules underlying $M^\bullet = \iota_* \Q_X^H$.

In particular, we have a quasi-isomorphism for all $p\in \Z$:
\begin{equation} \label{eq-DuBoisGrF} {\rm Gr}^F_{-p} {\rm DR}(\Q_X^H) \simeq \underline{\Omega}_X^p[-p].\end{equation}
\end{thm}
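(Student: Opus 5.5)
The plan is to reduce to the smooth case by cubical hyperresolution and then invoke Saito's compatibility of the filtered de Rham functor with proper direct images.

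\textbf{Step 1: choose a hyperresolution and recall the Du Bois complex.} Choose a cubical (or simplicial) hyperresolution $\varepsilon_\bullet \colon X_\bullet \to X$ with each $X_i$ smooth and each $\varepsilon_i$ proper. Recall that the filtered Du Bois complex is, by definition,
\[ (\underline{\Omega}_X^\bullet,F) = R\varepsilon_{\bullet *}(\Omega_{X_\bullet}^\bullet, \sigma^{\geq \bullet}), \]
where $\sigma$ is the stupid filtration. This is taken in the derived category of filtered differential complexes on $X$, hence on $Y$ after pushing forward by $\iota$. On the Hodge module side, cohomological descent for the constant object gives
\[ \Q_X^H \simeq \varepsilon_{\bullet *}\Q_{X_\bullet}^H \]
in $D^b({\rm MHM}(X))$. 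This is a formal consequence of proper base change together with the fact that the hyperresolution is of cohomological descent for $\Q$.

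\textbf{Step 2: the smooth case.} For each $i$, realize $\varepsilon_i$ locally as a composite $X_i \hookrightarrow Y\times \mathbf P^N \to Y$ of a closed embedding and a projection. For smooth $Z$ of dimension $d_Z$ one has $\Q_Z^H[d_Z] = (\cO_Z,F)$ with ${\rm Gr}^F_0\cO_Z = \cO_Z$. The filtered de Rham complex of Remark \ref{rmk-LeftDMods} is then exactly $(\Omega_Z^\bullet,\sigma)$, reindexed via $F_p = F^{-p}$ and shifted by $d_Z$. Applying $\widetilde{\rm DR}$ to $\iota_{Z*}$ for a closed embedding $\iota_Z$ of $Z$ into a smooth ambient variety gives a complex that is filtered quasi-isomorphic to the pushforward of $(\Omega_Z^\bullet,\sigma)$. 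This is Saito's basic compatibility \cite{SaitoMHP}*{Sec. 2.3}.

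\textbf{Step 3: pass through proper direct images.} Use Saito's theorem that for a proper morphism $f$ one has $\widetilde{\rm DR}\circ f_* \simeq Rf_*\circ \widetilde{\rm DR}$ as filtered differential complexes; this is the projection $Y\times\mathbf P^N\to Y$ case. Applying it termwise to the diagram $\varepsilon_{\bullet*}\Q^H_{X_\bullet}$ yields
\[ \widetilde{\rm DR}(\cM^\bullet,F) \simeq R\varepsilon_{\bullet *}(\Omega_{X_\bullet}^\bullet,\sigma) = (\underline{\Omega}_X^\bullet,F). \]

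\textbf{Main obstacle.} The hard part of Steps 1--3 is making the diagram-level statement rigorous. The pieces $\varepsilon_{i*}\Q^H_{X_i}$ are only objects of derived categories, so the cubical totalization must be realized at the level of actual complexes of mixed Hodge modules and of filtered differential complexes, with compatible comparison maps. I would first try to use Beilinson/Saito's realization $D^b({\rm MHM}(X))$ as the derived category of the abelian category ${\rm MHM}$. Failing that, I would build explicit Čech/Godement-type resolutions functorial in the cube, so that totalization commutes with both $\widetilde{\rm DR}$ and $R\varepsilon_*$. Independence of the hyperresolution and of the local embeddings then gives canonicity and gluing.

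\textbf{Step 4: the graded statement.} Finally, \eqref{eq-DuBoisGrF} follows by taking ${\rm Gr}^F_{-p}$. On filtered differential complexes, ${\rm Gr}^F$ is an $\cO$-linear complex and is preserved by filtered quasi-isomorphisms. On the right-hand side, ${\rm Gr}^{-p}_F$ in the $F_p=F^{-p}$ convention equals ${\rm Gr}_F^p\underline{\Omega}_X^\bullet = \underline{\Omega}_X^p[-p]$, coming from $\Omega_{X_\bullet}^p[-p]$. The remaining care is in tracking the shift conventions between left and right modules and the $[\dim Y]$ normalization, which I expect to be routine.
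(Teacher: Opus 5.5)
Your outline is correct and matches the route of the cited source; the paper gives no proof of its own. Saito's proof of \cite{SaitoMHC}*{Thm. 4.2} likewise expresses $\Q_X^H$ via the constant objects on a cubical hyperresolution, applies the compatibility of $\widetilde{\rm DR}$ with proper direct images \cite{SaitoMHP}*{Sec. 2.3} to identify the result with Du Bois's $R\varepsilon_{\bullet *}(\Omega_{X_\bullet}^\bullet,\sigma)$, and then takes ${\rm Gr}^F$. The chain-level realization of the cubical diagram, which you flag as the main obstacle, is genuinely needed and left open in your sketch; in the cited source it is supplied by Saito's formalism rather than by the hand-built resolutions you propose.
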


\section{Proof of Main Results}
We use the notation of Subsection \ref{subsec-Setup}.

\subsection{Hodge and Ext Filtrations}
We begin with two useful lemmas concerning local cohomology.

The following formal observation will be useful to construct morphisms to local cohomology below:
\begin{lem} \label{lem-adjunction} Let $\cM^\bullet \in D(\cD_Y)$ satisfy $j^*(\cM^\bullet) =0$, where $j\colon U = Y\setminus X \hookrightarrow Y$ is the open embedding. 

Then any morphism $\cM^\bullet \to \omega_Y$ in $D(\cD_Y)$ factors uniquely as
\[ \cM^\bullet \to R\Gamma_X(\omega_Y) \to \omega_Y.\]
\end{lem}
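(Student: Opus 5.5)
The natural route is the distinguished triangle of local cohomology,
\[ R\Gamma_X(\omega_Y) \to \omega_Y \to Rj_*j^*\omega_Y \xrightarrow{+1}, \]
which is a triangle in $D(\cD_Y)$, since $\Gamma_X$ and $j_*j^*$ are computed by flasque (for instance injective) resolutions in the category of right $\cD_Y$-modules. Applying ${\rm Hom}_{D(\cD_Y)}(\cM^\bullet,-)$ gives the long exact sequence
\[ {\rm Hom}(\cM^\bullet, Rj_*j^*\omega_Y[-1]) \to {\rm Hom}(\cM^\bullet,R\Gamma_X(\omega_Y)) \to {\rm Hom}(\cM^\bullet,\omega_Y) \to {\rm Hom}(\cM^\bullet,Rj_*j^*\omega_Y).\]
Existence of the factorization follows from the vanishing of the right-hand term, and uniqueness from the vanishing of the left-hand term. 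So the whole statement reduces to showing
\[{\rm Hom}_{D(\cD_Y)}(\cM^\bullet, Rj_*\cN[k])=0\]
for every complex $\cN$ of $\cD_U$-modules and every $k$.

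The vanishing is the adjunction $j^* \dashv Rj_*$ on derived categories of $\cD$-modules. Here $j^*$ is exact (it is just restriction to $U$), and $j_*$ is its right adjoint at the level of abelian categories: a $\cD_Y$-linear map $\cM\to j_*\cN$ is the same as a $\cD_U$-linear map $\cM|_U\to\cN$. Since $j^*$ is exact, it preserves quasi-isomorphisms. Moreover, $j_*$ sends K-injective complexes of $\cD_U$-modules to K-injective complexes, because its left adjoint $j^*$ is exact. The adjunction therefore derives to
\[ {\rm Hom}_{D(\cD_Y)}(\cM^\bullet,Rj_*\cN)\cong{\rm Hom}_{D(\cD_U)}(j^*\cM^\bullet,\cN). \]
The right-hand side vanishes because $j^*\cM^\bullet=0$.

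The one point needing care, and the main technical obstacle, is that $Rj_*$ computed in $\cD_Y$-modules agrees with $Rj_*$ computed in sheaves of abelian groups or $\cO$-modules. This is what identifies the cone of $\omega_Y\to Rj_*j^*\omega_Y$ with $R\Gamma_X(\omega_Y)[1]$, so that the triangle above really is the local cohomology triangle. It holds because injective $\cD_U$-modules are flasque; this is the standard argument, since $j_!$ of a restricted sheaf of ideals gives the extension criterion. Hence $j_*$ of an injective $\cD_U$-module computes $Rj_*$ in every category. With that in hand, the long exact sequence gives existence and uniqueness of the factorization $\cM^\bullet \to R\Gamma_X(\omega_Y) \to \omega_Y$.
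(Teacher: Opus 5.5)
Your proposal is correct and follows the paper's proof. The paper also uses the triangle $R\Gamma_X(\omega_Y) \to \omega_Y \to Rj_*j^*\omega_Y \xrightarrow{+1}$ in $D(\cD_Y)$, citing Hotta--Takeuchi--Tanisaki, Prop.~1.7.1, and then appeals to "standard properties of triangulated categories." You have made those properties explicit: the vanishing of ${\rm Hom}(\cM^\bullet, Rj_*j^*\omega_Y[k])$ via the adjunction $j^*\dashv Rj_*$, and the flasqueness of injective $\cD$-modules that underlies the cited triangle.
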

\begin{proof} This is a standard argument. We have, in $D^b(\cD_Y)$, the exact triangle
\[R\Gamma_X(\omega_Y) \to \omega_Y \to Rj_* j^*(\omega_Y) \xrightarrow[]{+1},\]
see \cite{HTT}*{Prop. 1.7.1}. The desired (unique, in this case) factorization follows from standard properties of triangulated categories.
\end{proof}

We have the following well-known consequence of the Riemann-Hilbert correspondence.

\begin{lem} \label{lem-endConstant} There is a canonical isomorphism
\[ {\rm End}_{D(\cD_Y)}(R\Gamma_X(\omega_Y)) \cong H^0(X^{\rm an},\C).\]
In particular, if $\chi \in {\rm End}_{D(\cD_Y)}(R\Gamma_X(\omega_Y))$ is nonzero on each connected component, then $\chi$ is an automorphism.
\end{lem}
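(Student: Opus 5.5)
The plan is to reduce the computation to the Riemann--Hilbert correspondence, since $R\Gamma_X(\omega_Y)$ has regular holonomic cohomology. First, $R\Gamma_X(\omega_Y)$ lies in $D^b_{\rm rh}(\cD_Y)$: it is the image of the right $\cD_Y$-module $\omega_Y$ under the local cohomology functor, and $\omega_Y$ is regular holonomic. Moreover, $\iota_*\iota^! \simeq R\Gamma_X$ on $\cD_Y$-modules. Next, I would use that the full subcategory $D^b_{\rm rh}(\cD_Y)\subseteq D(\cD_Y)$ is fully faithful on morphisms, because both are computed by the same $R{\rm Hom}$. With this, the de Rham functor ${\rm DR}$ (right module convention) gives an equivalence onto $D^b_c(\C_{Y^{\rm an}})$, and so
\[ {\rm End}_{D(\cD_Y)}(R\Gamma_X(\omega_Y)) \cong {\rm End}({\rm DR}(R\Gamma_X(\omega_Y))).\]

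Second, I would identify ${\rm DR}(R\Gamma_X(\omega_Y))$. The de Rham functor commutes with $\iota_*\iota^!$, and ${\rm DR}(\omega_Y)\simeq \C_{Y^{\rm an}}[\dim Y]$. Hence
\[ {\rm DR}(R\Gamma_X(\omega_Y)) \simeq \iota_*\iota^!\C_{Y^{\rm an}}[\dim Y].\]
Applying Verdier duality $\mathbb D_{\rm Verdier}$, which is an anti-equivalence, turns this into
\[ \iota_*\iota^*\mathbb D(\C_{Y^{\rm an}}[\dim Y]) \simeq \iota_*\C_{X^{\rm an}}[\dim Y].\]
Here I use that $\mathbb D\C_{Y^{\rm an}} = \C_{Y^{\rm an}}[2\dim Y]$ on the smooth $Y$. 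Endomorphism rings are preserved by the anti-equivalence up to taking the opposite ring, and the resulting ring is commutative, so the opposite does not matter. Since $\iota_*$ is fully faithful, this gives
\[ {\rm End}(\C_{X^{\rm an}}) = {\rm Hom}(\C_{X^{\rm an}},\C_{X^{\rm an}}) = H^0(X^{\rm an},\C).\]

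Third, for the ``in particular'', note that $H^0(X^{\rm an},\C)=\prod_{\text{components}}\C$ acts by scalars on each connected component. An element that is nonzero on every component is therefore a unit in this ring, and so is invertible; the corresponding $\chi$ is then an automorphism. Here I need that $\pi_0$ of $X^{\rm an}$ agrees with the connected components of $X$ in the Zariski topology (algebraic connected components are analytically connected). The statement is local only in the sense of the chosen embedding, and a choice of $Y$ connected is harmless.

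The main obstacle is bookkeeping: checking that ${\rm DR}$ intertwines $R\Gamma_X$ with $\iota_*\iota^!$ canonically, and that morphisms computed in $D(\cD_Y)$ agree with those in $D^b_{\rm rh}$. I would cite the standard references (e.g.\ \cite{HTT}) for both points rather than reprove them.
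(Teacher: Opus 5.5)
Your proposal is correct and takes essentially the same route as the paper: Riemann--Hilbert followed by Verdier duality identifies the endomorphism ring with ${\rm End}(\C_{X^{\rm an}})\cong H^0(X^{\rm an},\C)\cong\C^{\rho}$, and invertibility is then read off componentwise. You also fill in details the paper leaves implicit, all of them sound: the identification ${\rm DR}(R\Gamma_X(\omega_Y))\simeq\iota_*\iota^!\C_{Y^{\rm an}}[\dim Y]$, the remark that the anti-equivalence only passes to the opposite ring (harmless since the ring is commutative), and the agreement of Zariski and analytic connected components.
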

\begin{proof} The isomorphism is most easily seen after applying the Riemann-Hilbert functor, so that
\[ {\rm End}_{D(\cD_Y)}(R\Gamma_X(\omega_Y)) \cong {\rm End}_{D(\C_{Y^{\rm an}})}(\mathbf D^{\rm Verd}(\C_{X^{\rm an}})) \cong {\rm End}_{D(\C_{Y^{\rm an}})}(\C_{X^{\rm an}}),\]
which is well known to be isomorphic to $H^0(X^{\rm an},\C)$. Here $\mathbf D^{\rm Verd}(-)$ is Verdier duality for constructible complexes.

For the last claim, note that $H^0(X^{\rm an},\C) \cong \C^{\rho}$, where $\rho$ is the number of connected components of $X^{\rm an}$. Hence, an endomorphism is invertible if and only if it is invertible on each connected component, on which it is constant (so that nonzero is equivalent to invertible).
\end{proof}

The Hodge filtration on local cohomology can be studied through the filtration on the Du Bois complex. We have the identification
\[ \iota_* \iota^! \Q_Y^H[\dim Y] = \iota_* \mathbf D_X(\Q_X^H[\dim Y])(-\dim Y)\]
and we note that the right $\cD_Y$-module underlying $\cH^q (\iota_* \iota^! \Q_Y^H[\dim Y])$ agrees with the local cohomology module $\cH^q_X(\omega_Y)$. Thus, by applying ${\rm Gr}^F_{p-\dim Y}{\rm DR}(-)$ and using the isomorphism \eqref{eq-DuBoisGrF}, we have a relation between local cohomology and the (dual) Du Bois complex of $X$:
\begin{equation} \label{eq-LocCohDuBois} {\rm Gr}^F_{p-\dim Y} {\rm DR}(\iota_* \iota^! \Q_Y^H[\dim Y]) \simeq \iota_* \mathbb D_X^{\rm coh}(\underline{\Omega}_X^p)[p-\dim Y].\end{equation}

We now use this description and filtered duality to prove that the Hodge filtration is contained in the symbolic (hence, usual) Ext filtration. Our argument also gives a second proof that the (symbolic) Ext filtration on local cohomology is compatible with the order filtration on $\cD_Y$.

\begin{proof}[Proof of \theoremref{thm-HodgeInExt}] We start with the filtered morphisms from Remark \ref{rmk-MapDuBois}:
\[ (\Omega_{X/\C}^H, {\rm Fil}_{\rm inf}^\bullet) \to(\Omega_{X/\C}^H, {\rm Fil}_{\rm symbinf}^\bullet) \to(\underline{\Omega}_{X}^\bullet, F^\bullet), \]
which are morphisms of filtered differential complexes. Indeed, we can see this by factoring through $(\Omega_X^\bullet,F^\bullet)$, and the morphisms in and out of that complex are filtered $\cO_Y$-linear, so they are automatically morphisms of filtered differential complexes.

We may and will assume that $Y$ is affine. Using \theoremref{thm-DuBoisCompare} we have a filtered quasi-isomorphism $(\underline{\Omega}_X^\bullet,F^\bullet) \simeq \widetilde{\rm DR}(\cM^\bullet,F)$ where $(\cM^\bullet,F)$ is a bounded complex of induced filtered $\cD_Y$-modules underlying $\iota_*\Q_X^H \in D^b({\rm MHM}(Y))$. By coherence of the filtrations, we can assume that it is a bounded complex of finite locally free induced filtered $\cD_Y$-modules, to apply \propositionref{prop-strictnessDual}.

Apply the contravariant functor $\cH^q F_p {\rm DR}^{-1} \mathbb D(-)$ to these morphisms. We get morphisms
\[\cH^q F_p {\rm DR}^{-1} \mathbb D(\widetilde{\rm DR}(\cM^\bullet,F)) \to \cH^q F_p {\rm DR}^{-1} \mathbb D(\Omega_{X/\C}^H,{\rm Fil}_{\rm symbinf}^\bullet) \to \cH^q F_p {\rm DR}^{-1} \mathbb D(\Omega_{X/\C}^H,{\rm Fil}_{\rm inf}^\bullet).\]

We use the commutativity \cite{SaitoMHP}*{2.4.11, p. 895} ${\rm DR}^{-1} \circ \mathbb D(-) \simeq \mathbf D\circ  {\rm DR}^{-1}(-)$ for the first object, to rewrite it as
\[ 
    \cH^q F_p \mathbf D {\rm DR}^{-1} \widetilde{\rm DR}(\cM^\bullet,F) \cong \cH^q F_p \mathbf D(\cM^\bullet,F)
\]
where we used the fact that ${\rm DR}^{-1}$ and ${\rm DR}$ are mutually quasi-inverse on complexes of induced filtered $\cD_Y$-modules. By the definition of the dual functor, $\mathbf D(\cM^\bullet,F)$ underlies the dual object $\iota_* \mathbf D(\Q_X^H)$, which we noted at the start of this subsection has an isomorphism
\[ \iota_* \mathbf D(\Q_X^H) \simeq (\iota_* \iota^! \Q_Y^H[\dim Y])(\dim Y)[\dim Y].\]

Hence, we have a filtered isomorphism
\[ 
    \cH^q \mathbf D(\cM^\bullet,F) \cong (\cH^{q+\dim Y}_X(\omega_Y),F_{\bullet-\dim Y}).
\]

On the other hand, by construction, there are canonical equalities
\[ (\Omega_{X/\C}^H, {\rm Fil}_{\rm inf}^\bullet) = {\rm DR}(\widehat{\cO}_Y, I^\bullet)[-\dim Y],\]
\[ (\Omega_{X/\C}^H, {\rm Fil}_{\rm symbinf}^\bullet) = {\rm DR}(\widehat{\cO}_Y, I^{(\bullet)})[-\dim Y],\]
where the (symbolic) $I$-adic filtration on the completion is equal to the entire completion for $p =0$. Hence, we can apply the result of \propositionref{prop-DualOfDR} and rewrite the morphism
\[
    \cH^q F_p {\rm DR}^{-1} \mathbb D(\Omega_{X/\C}^H,{\rm Fil}_{\rm symbinf}^\bullet) \to \cH^q F_p {\rm DR}^{-1} \mathbb D(\Omega_{X/\C}^H,{\rm Fil}_{\rm inf}^\bullet) 
\]
as the natural morphism
\[ \cE xt^{q+\dim Y}(\cO_Y/I^{(p+1)}, \omega_Y) \to \cE xt^{q+\dim Y}(\cO_Y/I^{p+1},\omega_Y).\]

Replacing $q$ by $q-\dim Y$ and using strictness of the dual, combined with the computation above, we have a natural composition of morphisms:
\[ F_{p-\dim Y} \cH^q_X(\omega_Y) \cong \cH^{q-\dim Y} F_p \mathbf D(\cM^\bullet,F) \to \cE xt^q_{\cO_Y}(\cO_Y/I^{(p+1)},\omega_Y) \to \cE xt^q_{\cO_Y}(\cO_Y/I^{p+1},\omega_Y).\]

These morphisms fit into the commutative diagram:
\[ \begin{tikzcd} F_{p-\dim Y} \cH^q_X(\omega_Y) \ar[r] \ar[d] & \cE xt^q_{\cO_Y}(\cO_Y/I^{(p+1)},\omega_Y) \ar[r] \ar[d] & \cE xt^q_{\cO_Y}(\cO_Y/I^{p+1},\omega_Y)\ar[dl] \\ \cH^{q-\dim Y} \mathbf D(\cM^\bullet) \ar[r,"\chi"] & \cH^{q-\dim Y} {\rm DR}^{-1}\mathbb D(\Omega_{X/\C}^H) & {} \end{tikzcd}.\]

The proof is complete once we observe that the morphism $\chi$ is an isomorphism. Indeed, working in the derived category of $\cD$-modules, the morphism
\begin{equation}\label{eq:drd}
{\rm DR}^{-1} \mathbb D(\underline{\Omega}_X^\bullet, F^\bullet) \to {\rm DR}^{-1} \mathbb D(\Omega_{X/\C}^H,{\rm Fil}_{\rm inf}^\bullet
)
\end{equation}
can be identified with an endomorphism of $R\Gamma_X(\omega_Y)[\dim Y]$.

It suffices by \lemmaref{lem-endConstant} to show that the morphism~\eqref{eq:drd} is nonzero on any connected component of $X$. It follows from \cite{BhattDDR}*{Rmk. 5.3} that the morphism~\eqref{eq:drd} is an identity on $Y\setminus X_{\rm sing}$, hence we are done.
\end{proof}

\subsection{Higher Injectivity Theorems}

We turn now to the proof of the higher injectivity theorem (\theoremref{thm-Injectivity}). We make use of some elementary results:
\begin{lem} \label{lem-testIsoMod} Let $C\subseteq A \subseteq B$ be a chain of monomorphisms in an abelian category. Then the inclusion $A \to B$ is an epimorphism if and only if it is an isomorphism if and only if the induced morphism $A/C \to B/C$ is an isomorphism.
\end{lem}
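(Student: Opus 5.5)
The plan is to argue by the chain of equivalences in the statement: (i) $A\to B$ is an epimorphism, (ii) it is an isomorphism, and (iii) $A/C\to B/C$ is an isomorphism. We only use the abelian-category axioms: every monomorphism is the kernel of its cokernel, every epimorphism is the cokernel of its kernel, and a morphism that is both mono and epi is an isomorphism.

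First, (i)$\Leftrightarrow$(ii). The morphism $A\to B$ is a monomorphism by hypothesis. If it is also an epimorphism, then in an abelian category it is an isomorphism: a monomorphism is the kernel of its cokernel, and that cokernel is zero here, so the inclusion is the kernel of $B\to 0$, i.e.\ the identity of $B$ up to isomorphism. The converse is trivial.

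Second, (ii)$\Leftrightarrow$(iii). If $A\to B$ is an isomorphism, then the induced map $A/C\to B/C$ is an isomorphism by functoriality of cokernels. Here $C\to B$ is the composite $C\to A\to B$, so the quotients are taken compatibly.

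For the converse, I would use the snake lemma, or equivalently the nine lemma, on the map of short exact sequences
\[ 0\to C\to A\to A/C\to 0 \quad\text{and}\quad 0\to C\to B\to B/C\to 0, \]
with vertical maps ${\rm id}_C$, the inclusion $A\to B$, and the induced map $A/C\to B/C$. The outer vertical maps are isomorphisms, so the five lemma (or the snake lemma sequence $0\to 0\to\ker\to 0\to 0\to{\rm coker}\to 0$) shows that $A\to B$ is an isomorphism.

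None of these steps is a real obstacle. The only point needing care is that $C\to B$ factors through $A$, so the morphism of short exact sequences commutes and the five lemma applies.
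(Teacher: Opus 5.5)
Your proof is correct and complete. The paper states this lemma without proof, treating it as elementary, and your argument is a standard way to justify it. For the first equivalence you use that a morphism which is both a monomorphism and an epimorphism in an abelian category is an isomorphism. For the second you apply the short five lemma to the morphism from $0\to C\to A\to A/C\to 0$ to $0\to C\to B\to B/C\to 0$, which commutes because $C\to B$ factors through $A$.

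As a side remark, the snake lemma in your setup also shows that $A/C\to B/C$ is always a monomorphism with the same cokernel as $A\to B$. So all three conditions are equivalent to $A/C\to B/C$ being an epimorphism.
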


The following will be used in the spectral sequence arguments below:

\begin{lem} \label{lem-ControlCohomology} Consider a commutative diagram
\[ \begin{tikzcd} A^{-1}\ar[r,"d"] \ar[d,"\alpha"] & A^0 \ar[r,"d"] \ar[d,"\chi"] & A^1 \ar[d,"\beta"] \\ B^{-1} \ar[r,"d"] & B^0 \ar[r,"d"] & B^1 \end{tikzcd}\]
where both compositions of horizontal arrows vanish.

\begin{enumerate}\item  If $\alpha, \chi$ are isomorphisms and $\beta$ is injective, then the induced map $\cH^0(A^\bullet) \to \cH^0(B^\bullet)$ is an isomorphism.

\item If $A^1 =B^1 = 0$, $\alpha$ is an isomorphism, and $\chi$ is injective, then the induced map $\cH^0(A^\bullet) \to \cH^0(B^\bullet)$ is injective.
\end{enumerate}
\end{lem}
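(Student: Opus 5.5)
Both parts are direct diagram chases in an abelian category; I will argue with elements, which is legitimate by Freyd--Mitchell, or directly in the sheaf categories where the lemma will be applied. Write $Z^0(A) = \ker(d\colon A^0\to A^1)$ and $B^0(A) = {\rm im}(d\colon A^{-1}\to A^0)$, so that $\cH^0(A^\bullet) = Z^0(A)/B^0(A)$, and similarly for $B^\bullet$. The plan is to show that $\chi$ carries $Z^0(A)$ and $B^0(A)$ onto, respectively into, the corresponding subobjects of $B^0$, and then conclude via \lemmaref{lem-testIsoMod}.

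For (1), first I check that $\chi$ restricts to an isomorphism $B^0(A)\to B^0(B)$. By commutativity, $\chi(d a) = d\alpha(a)$, so $\chi(B^0(A)) = d(\alpha(A^{-1})) = d(B^{-1}) = B^0(B)$, using surjectivity of $\alpha$. Injectivity on this subobject is inherited from $\chi$.

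Next I show that $\chi$ restricts to an isomorphism $Z^0(A)\to Z^0(B)$. If $x\in Z^0(A)$, then $d\chi(x) = \beta(dx) = 0$, so $\chi(Z^0(A))\subseteq Z^0(B)$. Conversely, let $y\in Z^0(B)$ and write $y=\chi(x)$, using that $\chi$ is surjective. Then $\beta(dx) = d\chi(x) = dy = 0$, and injectivity of $\beta$ gives $dx = 0$, so $x\in Z^0(A)$.

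Hence $\chi$ identifies the chain $B^0(A)\subseteq Z^0(A)$ with the chain $B^0(B)\subseteq Z^0(B)$. The induced map on quotients is therefore an isomorphism: it is surjective because $Z^0(A)\to Z^0(B)$ is surjective. It is injective because if $\chi(x)\in B^0(B)=\chi(B^0(A))$, then injectivity of $\chi$ gives $x\in B^0(A)$. In the language of \lemmaref{lem-testIsoMod}, we may transport everything along the isomorphism $\chi$ and compare the two quotients.

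For (2), since $A^1=B^1=0$, we have $\cH^0(A^\bullet) = A^0/B^0(A)$ and $\cH^0(B^\bullet) = B^0/B^0(B)$. As in (1), $\alpha$ being an isomorphism gives $\chi(B^0(A)) = B^0(B)$. Now suppose $x\in A^0$ and $\chi(x)\in B^0(B)$. Then $\chi(x) = \chi(b)$ for some $b\in B^0(A)$, and injectivity of $\chi$ gives $x = b\in B^0(A)$. Thus the induced map on $\cH^0$ is injective.

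There is no real obstacle here. The only care needed is to use surjectivity of $\alpha$, and not merely injectivity, when identifying the images of the differentials. Both parts need exactly that.
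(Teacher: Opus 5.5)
Your proof is correct and follows the paper's argument. In (1) the paper also reduces, via \lemmaref{lem-testIsoMod}, to showing that $\chi$ maps $\ker(d_A)$ onto $\ker(d_B)$, using surjectivity of $\chi$ and injectivity of $\beta$. In (2) it runs the same chase, lifting along $\alpha$ and cancelling by injectivity of $\chi$; you simply make explicit the identification $\chi({\rm im}\, d_A) = {\rm im}\, d_B$, which the paper leaves implicit.
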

\begin{proof} Assume $\alpha$ and $\chi$ are both isomorphisms and $\beta$ is injective. By \lemmaref{lem-testIsoMod} above, $\cH^0(\chi)$ is an isomorphism if and only if the morphism $\ker(d_A) \to \ker(d_B)$ is an isomorphism. As $\chi$ is injective, it suffices to prove that this morphism is surjective. Let $b \in \ker(d_B)$, so $d_B(b) = 0$. As $\chi$ is surjective, there exists $a\in A^0$ such that $\chi(a) = b$. We need to prove $d_A(a) = 0$. We have the chain of equalities $\beta d_A(a) = d_B \chi(a) = d_B(b) = 0$, and so this follows from injectivity of $\beta$.

For the second claim, it suffices to show that if $a\in \ker(d_A) = A^0$ satisfies $\chi(a) \in {\rm im}(d_B)$, then $a\in {\rm im}(d_A)$. Write $\chi(a) = d_B(b)$, and so using the fact that $\alpha$ is an isomorphism, we can find $a' \in A^{-1}$ such that $b = \alpha(a')$. Then $\chi(a) = d_B(\alpha(a')) = \chi(d_A(a'))$, and so injectivity of $\chi$ gives the claim.
\end{proof}

We will also use the following slightly more technical lemma. If, in the following lemma, we had a morphism of filtered $\cD_Y$-modules in place of a morphism of complexes, the proof would be essentially immediate (and the result is well known to experts).

\begin{lem} \label{lem-Technical} Let $\phi \colon (\cM_2^\bullet,F) \to (\cM_1^\bullet,F)$ be a morphism between filtered complexes of $\cD_Y$-modules such that, locally on $Y$, there exists an integer $a$ such that \[ F_p\mathcal M_\ell^i=0 \quad\text{for every }p<a,\quad i\in\mathbb Z,\quad \ell\in\{1,2\}. \]

Fix $k\in\Z$, and assume that the following hold:
\begin{enumerate} \item The morphism $F_p \widetilde{\rm DR}(\cM_2^\bullet) \to F_p \widetilde{\rm DR}(\cM_1^\bullet)$ is a quasi-isomorphism for all $p < k$.

\item $ \cH^qF_p \cM_2^\bullet \to \cH^q F_p \cM_1^\bullet$ is injective for all $q\in \Z$ and $p \leq k$.
\end{enumerate}

Then the following statements hold:
\begin{enumerate} \item For all $q \in \Z$, the morphism
\[ \cH^q {\rm Gr}^F_k \widetilde{\rm DR}(\cM_2^\bullet) \to \cH^q {\rm Gr}^F_k \widetilde{\rm DR}(\cM_1^\bullet)\]
is injective.

\item For all $p < k$ and $q\in \Z$, the morphism $ \cH^qF_p \cM_2^\bullet \to \cH^qF_p \cM_1^\bullet$ is an isomorphism.
\end{enumerate}
    
\end{lem}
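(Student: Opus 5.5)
The approach is to filter the de Rham complexes by the degree of $\bigwedge\cT_Y$, so that everything is controlled by the modules $\cH^qF_j\cM^\bullet_\ell$, and then to run a spectral sequence comparison, proving both conclusions together by induction on $p$.

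Recall that for right $\cD_Y$-modules
\[ F_p\widetilde{\rm DR}(\cM^i) = \left[ F_{p-n}\cM^i\otimes\bigwedge^n\cT_Y \to \cdots \to F_p\cM^i\right], \qquad n=\dim Y. \]
This is exact in $p$ and is a "stupid" filtration in the $\cT$-direction. Consequently $F_p\widetilde{\rm DR}(\cM_\ell^\bullet)$ is the total complex of a double complex with columns $F_{p-s}\cM_\ell^\bullet\otimes\bigwedge^s\cT_Y$, for $s=0,\dots,n$, placed in $\cT$-degree $-s$. Filtering by columns gives a spectral sequence
\[ E_1^{-s,t}=\cH^tF_{p-s}\cM_\ell^\bullet\otimes\bigwedge^s\cT_Y\implies \cH^{t-s}F_p\widetilde{\rm DR}(\cM_\ell^\bullet). \]
It converges because there are only finitely many columns. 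The same description applies to ${\rm Gr}^F_k$, with $F$ replaced by ${\rm Gr}^F$. Note that $\bigwedge^s\cT_Y$ is locally free, so tensoring is exact. By hypothesis all $F_{j}\cM$ with $j<a$ vanish.

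We prove conclusion (2) by induction on $p$, starting below $a$, where everything is zero. Suppose $\cH^qF_j\cM_2\to\cH^qF_j\cM_1$ is an isomorphism for all $j<p$, where $p<k$. The map of complexes $F_p\widetilde{\rm DR}(\cM_2)\to F_p\widetilde{\rm DR}(\cM_1)$ has cone equal to the total complex of the cones of the columns $F_{p-s}\cM_2\to F_{p-s}\cM_1$. The columns with $s\ge1$ have acyclic cones by the induction hypothesis, because $\cH^q$ is an isomorphism for every $q$. Hence the cone of $F_p\widetilde{\rm DR}$ is quasi-isomorphic to the cone of $F_p\cM_2^\bullet\to F_p\cM_1^\bullet$. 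By hypothesis (1) that cone is acyclic. So $F_p\cM_2\to F_p\cM_1$ is a quasi-isomorphism, which gives conclusion (2) at $p$. The argument requires neither injectivity nor the spectral sequence, only an iterated-cone argument; filtering the cone by the finitely many columns makes this precise.

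For conclusion (1), consider ${\rm Gr}^F_k\widetilde{\rm DR}(\cM_\ell)$. Its columns are ${\rm Gr}^F_{k-s}\cM_\ell\otimes\bigwedge^s\cT_Y$. For $s\ge1$ the maps on these columns are quasi-isomorphisms. This follows from conclusion (2), applied to $F_{k-s}$ and $F_{k-s-1}$, together with the five lemma on the long exact sequences for $F_{j-1}\subset F_j$. Let $C$ be the cone of ${\rm Gr}^F_k\widetilde{\rm DR}(\cM_2)\to{\rm Gr}^F_k\widetilde{\rm DR}(\cM_1)$. Then $C$ is quasi-isomorphic to the cone $C_0$ of ${\rm Gr}^F_k\cM_2^\bullet\to{\rm Gr}^F_k\cM_1^\bullet$, sitting in column $0$. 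Injectivity of $\cH^q$ of a map $f$ is equivalent to $\cH^{q-1}({\rm cone} f)\to\cH^q(\text{source})$ being zero, that is, to $\cH^{q-1}(\text{target})\to\cH^{q-1}({\rm cone})$ being surjective for all $q$. So it suffices to show that $\cH^{q-1}{\rm Gr}_k\cM_1\to\cH^{q-1}C_0$ is surjective. Equivalently, we need $\cH^q{\rm Gr}_k\cM_2\to\cH^q{\rm Gr}_k\cM_1$ to be injective for all $q$.

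To establish this last injectivity, use the diagram of long exact sequences for $0\to F_{k-1}\to F_k\to{\rm Gr}_k\to0$ for $\ell=1,2$. On $F_{k-1}$ the maps are isomorphisms by conclusion (2), and on $F_k$ they are injective by hypothesis (2). A four-lemma chase, in the spirit of \lemmaref{lem-ControlCohomology}, then gives injectivity on $\cH^q{\rm Gr}_k$. Concretely, take $x\in\cH^q{\rm Gr}_k\cM_2$ mapping to zero. Its boundary in $\cH^{q+1}F_{k-1}\cM_2$ maps to zero in $\cH^{q+1}F_{k-1}\cM_1$, so the boundary is itself zero. Hence $x$ lifts to some $y\in\cH^qF_k\cM_2$. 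The image of $y$ in $\cH^qF_k\cM_1$ comes from some $z_1\in\cH^qF_{k-1}\cM_1$, and $z_1=\phi(z_2)$ for some $z_2\in\cH^qF_{k-1}\cM_2$. Injectivity on $F_k$ then forces $y$ to be the image of $z_2$, so $x=0$.

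The main point requiring care is the compatibility of the column decomposition with the $\cD$-module differentials of $\widetilde{\rm DR}$. These are not $\cO$-linear across columns, but the differentials of the total complex are honest maps of sheaves of $\C$-vector spaces, and the cone arguments only use exactness, so this should be harmless.
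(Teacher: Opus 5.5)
Your proof is correct, but it takes a different route from the paper's.

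\textbf{The paper's route.} The paper proves conclusion (2) by induction on $p$, comparing the column spectral sequences of ${\rm Gr}^F_p\widetilde{\rm DR}(\cM_\ell^\bullet)$ page by page:
\begin{enumerate}
\item It first uses hypothesis (2) and a four-lemma to get injectivity of $\cH^q{\rm Gr}^F_p(\phi)$ (its Claim 0).
\item It then propagates ``isomorphism for $a<0$, injective for $a=0$'' through all pages via \lemmaref{lem-ControlCohomology}.
\item Finally it uses the isomorphism on abutments, together with a filtration argument, to get surjectivity on $E_\infty^{0,q}$ and hence on $E_1^{0,q}$.
\end{enumerate}
Conclusion (1) comes from the same spectral-sequence comparison at level $k$.

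\textbf{Your route.} You observe that column $0$ is a subcomplex of the total complex, since the de Rham differential lowers $s$. All other columns map quasi-isomorphically, so the cone of the total map is quasi-isomorphic to the cone of the column-$0$ map. This replaces the page-by-page bookkeeping and the abutment-filtration argument with a single finite-filtration cone argument.

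\textbf{What each approach buys.} Your argument also yields a slightly sharper statement. Conclusion (2) follows from hypothesis (1) alone, so hypothesis (2) is really only needed at $p=k$; for $p<k$ it is a consequence. Your reduction of conclusion (1) to injectivity of $\cH^q{\rm Gr}^F_k\cM_2^\bullet\to\cH^q{\rm Gr}^F_k\cM_1^\bullet$, and the four-lemma chase that follows, is exactly the paper's Claim 0 applied at $p=k$. The paper's formulation mainly gives a uniform template, since its Claim 1 is reused verbatim for Claim 4. Yours is shorter and more transparent.
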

\begin{proof} We first prove that for all $p < k$ and $q\in \Z$, the morphism $\cH^qF_p \cM_2^\bullet \to \cH^qF_p \cM_1^\bullet$ is an isomorphism. We use induction on $p$. The claim is clearly true for $p \ll 0$ as both modules are $0$ in that case by assumption. So fix $p < k$ and assume for all $p' < p$ that $\cH^qF_{p'} \cM_2^\bullet \to \cH^qF_{p'} \cM_1^\bullet$ is an isomorphism. 

\textbf{Claim 0.} It suffices to prove the injective morphism $\cH^q {\rm Gr}^F_p(\phi)\colon  \cH^q {\rm Gr}^F_p\cM_2^\bullet \to \cH^q {\rm Gr}^F_p \cM_1^\bullet$ is surjective.

\textbf{Proof.} By one of the Four Lemmas, we see that the morphism $\cH^q {\rm Gr}^F_p(\phi)$ is always injective as in the Claim statement. Now, if we assume it is surjective, we use the other Four Lemma to conclude that $\cH^q F_p \cM_2^\bullet \to \cH^q F_p \cM_1^\bullet$ is surjective, hence an isomorphism.

The morphism $\phi$ induces a morphism of double complexes ${\rm Gr}^F_p {\rm DR}(\cM^\bullet_2) \to {\rm Gr}^F_p {\rm DR}(\cM^\bullet_1)$. Specifically, for $\ell \in \{1,2\}$, the double complex has $(s,t)$-term
\[ {\rm Gr}^F_{p+t} \cM^s_\ell \otimes_{\cO_Y} \bigwedge^{-t} \cT_Y.\]

If we first take cohomology in the $s$-direction, then since $\bigwedge^{-t} \cT_Y$ is flat over $\cO_Y$, we get
\[ (\cH^q {\rm Gr}^F_{p+t} \cM^\bullet_\ell)\otimes_{\cO_Y} \bigwedge^{-t} \cT_Y\]
and the remaining $t$-direction differential is that from the de Rham complex construction. In other words, we have the morphism of $E_1$-page spectral sequences
\[ \begin{tikzcd}  E_1^{a,q} = \cH^q {\rm Gr}^F_{p+a} \cM^\bullet_2 \otimes_{\cO_Y} \bigwedge^{-a} \cT_Y\ar[r, Rightarrow] \ar[d,swap,"\cH^q {\rm Gr}^F_{p+a}(\phi) \otimes {\rm id}"] & \cH^{a+q} {\rm Gr}^F_p {\rm DR}(\cM_2^\bullet) \ar[d,"\cH^{a+q} {\rm Gr}^F_p {\rm DR}(\phi)"] \\ \widetilde{E}_1^{a,q} = \cH^q {\rm Gr}^F_{p+a} \cM^\bullet_1 \otimes_{\cO_Y} \bigwedge^{-a} \cT_Y\ar[r, Rightarrow] &  \cH^{a+q} {\rm Gr}^F_p {\rm DR}(\cM_1^\bullet)\end{tikzcd} \]
where our assumption implies that the morphism on the abutments is an isomorphism for any $a,q$. By the inductive hypothesis, note that the morphism $E_1^{a,q} \to \widetilde{E}_1^{a,q}$ is an isomorphism for $a < 0$ and injective for $a =0$ for any $q\in \Z$. This persists through all pages of the spectral sequences, as we now show:

\textbf{Claim 1.} For any $r\geq 1$, the morphism $E_r^{a,q} \to \widetilde{E}_r^{a,q}$ is $\begin{cases} \text{an isomorphism } & a < 0 \\ \text{injective } & a = 0\end{cases}$.

\textbf{Proof.} We use induction on $r$. Assume the claim is true for the $E_r$-page. Then, since $E_{r+1}$ is the cohomology of $(E_r,d_r)$, consider the following commutative diagram:
\[ \begin{tikzcd} E_r^{a-r,q+r-1} \ar[r] \ar[d] & E_r^{a,q} \ar[r] \ar[d] & E_r^{a+r,q-r+1}\ar[d] \\ \widetilde{E}_r^{a-r,q+r-1} \ar[r] & \widetilde{E}_r^{a,q} \ar[r] & \widetilde{E}_r^{a+r,q-r+1}\end{tikzcd}.\]

Hence, the claim is immediate from \lemmaref{lem-ControlCohomology} above.

\textbf{Claim 2.} The morphism $E_\infty^{0,q} \to \widetilde{E}_\infty^{0,q}$ is surjective, hence by Claim 1 is an isomorphism.

\textbf{Proof.} By assumption, the morphism $A^q \to \widetilde{A}^q$ between the abutments of the two spectral sequences is an isomorphism for any $q\in \Z$. Moreover, if $G^\bullet A^q$ (resp. $G^\bullet \widetilde{A}^q$) is the filtration given by the spectral sequence, with associated graded pieces ${\rm Gr}_G^a A^q = E_\infty^{a,q-a}$ (resp. ${\rm Gr}_G^a \widetilde{A}^q = \widetilde{E}_\infty^{a,q-a}$), then this morphism respects the filtration.

Let $m \in \widetilde{E}_\infty^{0,q} = {\rm Gr}_G^0 \widetilde{A}^q = G^0 \widetilde{A}^q$, where for the last equality, we use that $0$ is the largest possible index $a$ with ${\rm Gr}_G^a \widetilde{A}^q \neq 0$. By surjectivity of $A^q \to \widetilde{A}^q$, we can find $\eta \in A^q$ mapping to $m$. We are done if $\eta \in G^0 A^q$. If not, $\eta \in G^a A^q$ for some $a < 0$ and we choose $a<0$ maximal such that $\eta\in G^a A^q$; then $\eta$ defines a nonzero class in ${\rm Gr}_G^aA^q$. But then under the morphism ${\rm Gr}_G^a A^q \to {\rm Gr}_G^a \widetilde{A}^q$, the class $\overline{\eta}$ maps to $0$, which contradicts the isomorphism property of this morphism. This finishes the proof.

\textbf{Claim 3.} The morphism $\cH^q F_p \cM^\bullet_2 \to \cH^q F_p\cM^\bullet_1$ is an isomorphism.

\textbf{Proof.} By \textbf{Claim 0}, it suffices to prove that $\cH^q{\rm Gr}^F_p  \cM^\bullet_2 \to  \cH^q {\rm Gr}^F_p \cM^\bullet_1$ is an isomorphism. This is equivalent to the claim that $E_1^{0,q} \to \widetilde{E}_1^{0,q}$ is an isomorphism. By applying \lemmaref{lem-testIsoMod} again, it suffices to prove $E_2^{0,q} \to \widetilde{E}_2^{0,q}$ is an isomorphism. Repeating in this way, we reduce to the claim that $E_\infty^{0,q} \to \widetilde{E}_\infty^{0,q}$ is an isomorphism, which we observed above.

\textbf{Claim 4.} For all $\ell \in \Z$, the morphism
\[ \cH^\ell {\rm Gr}^F_k \widetilde{\rm DR}(\cM_2^\bullet) \to \cH^\ell {\rm Gr}^F_k \widetilde{\rm DR}(\cM_1^\bullet)\]
is injective.

\textbf{Proof.} By the above, we know that $ \cH^q {\rm Gr}^F_p \cM_2^\bullet \to \cH^q {\rm Gr}^F_p\cM_1^\bullet$ is an isomorphism for all $p < k$. We now focus on the morphism of spectral sequences
\[ \begin{tikzcd}  E_1^{a,q} = \cH^q {\rm Gr}^F_{k+a} \cM^\bullet_2 \otimes_{\cO} \bigwedge^{-a} \cT_Y\ar[r, Rightarrow] \ar[d] & \cH^{a+q} {\rm Gr}^F_k {\rm DR}(\cM_2^\bullet) \ar[d] \\ \widetilde{E}_1^{a,q} = \cH^q {\rm Gr}^F_{k+a} \cM^\bullet_1 \otimes_{\cO} \bigwedge^{-a} \cT_Y\ar[r, Rightarrow] &  \cH^{a+q} {\rm Gr}^F_k {\rm DR}(\cM_1^\bullet)\end{tikzcd} \]
to which we can apply \textbf{Claim 1} above and conclude that for any $r\geq 1$, the morphism $E_r^{a,q} \to \widetilde{E}_r^{a,q}$ is $\begin{cases} \text{an isomorphism } & a < 0 \\ \text{injective } & a = 0\end{cases}$. 

Our goal is to prove, for all $\ell \in \Z$, that the morphism of abutments $\psi \colon A^\ell \to \widetilde{A}^\ell$ is injective. Once again, we use the filtration induced by the spectral sequences. By \textbf{Claim 1}, the morphism ${\rm Gr}_G^a(\psi) \colon {\rm Gr}_G^a A^\ell = E_\infty^{a,\ell-a} \to \widetilde{E}_\infty^{a,\ell-a} = {\rm Gr}_G^a \widetilde{A}^\ell$ is injective for all $a$, and so the claimed injectivity is obvious.
\end{proof}

This lemma is useful because it translates injectivity at the filtered $\cD$-module level into injectivity for filtered differential complexes.

We prove the following stronger version of \theoremref{thm-Injectivity}.

\begin{thm} \label{thm-Injectivity2} Let $X$ be a complex algebraic variety and let $k \in \Z_{\geq 0}$.

\begin{enumerate} \item \label{itm-InjectivePre} If $X$ is pre-$(k-1)$-Du Bois, then the natural morphism
\[ \cE xt_{\cO_X}^q(\underline{\Omega}_X^k,\omega_X^\bullet) \to \cE xt_{\cO_X}^q(\cH^0(\underline{\Omega}_X^k),\omega_X^\bullet)\]
is injective for all $q\in \Z$.
\item If $X$ is strictly $(k-1)$-Du Bois, then the natural morphism
\[ \cE xt_{\cO_X}^q(\underline{\Omega}_X^k,\omega_X^\bullet) \to \cE xt_{\cO_X}^q(\Omega_X^k,\omega_X^\bullet)\]
is injective for all $q\in \Z$.

\item\label{thm-symbolic} If ${\rm Gr}_{\rm symbinf}^p \Omega_{X/\C}^H \to \underline{\Omega}_X^p[-p]$ is a quasi-isomorphism for all $p\leq k-1$, then the natural morphism
\[ \cE xt_{\cO_X}^q(\underline{\Omega}_X^k,\omega_X^\bullet) \to \cE xt_{\cO_X}^q({\rm Gr}_{\rm symbinf}^k \Omega_{X/\C}^H[k],\omega_X^\bullet)\]
is injective for all $q\in \Z$.


\item\label{thm-strong} If ${\rm Gr}_{\rm inf}^p \Omega_{X/\C}^H \to \underline{\Omega}_X^p[-p]$ is a quasi-isomorphism for all $p\leq k-1$, then the natural morphism
\[ \cE xt_{\cO_X}^q(\underline{\Omega}_X^k,\omega_X^\bullet) \to \cE xt_{\cO_X}^q({\rm Gr}_{\rm inf}^k \Omega_{X/\C}^H[k],\omega_X^\bullet)\]
is injective for all $q\in \Z$.


\item  \label{itm-Cotangent} If $\bigwedge^p L_{X/\C} \to \underline{\Omega}_X^p$ is a quasi-isomorphism for all $p \leq k-1$, then the natural morphism
\[ \cE xt^q_{\cO_X}(\underline{\Omega}_X^k,\omega_X^\bullet) \to \cE xt^q_{\cO_X}(\bigwedge^k L_{X/\C}, \omega_X^\bullet)\]
is injective for all $q\in \Z$.
\end{enumerate}
\end{thm}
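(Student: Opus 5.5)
All five statements will be proved by one mechanism: \lemmaref{lem-Technical}, applied to the dual of a filtered morphism of filtered differential complexes
\[ \varphi \colon (\cC^\bullet_1,F) \to (\underline{\Omega}_X^\bullet,F^\bullet) \simeq \widetilde{\rm DR}(\cM^\bullet,F). \]
The source $\cC^\bullet_1$ is chosen case by case:
\begin{itemize}
\item for \itemref{thm-strong}, $(\Omega_{X/\C}^H,{\rm Fil}_{\rm inf})$;
\item for \itemref{thm-symbolic}, $(\Omega^H_{X/\C},{\rm Fil}_{\rm symbinf})$;
\item for \itemref{itm-Cotangent}, $(\widehat{\rm dR}_X,{\rm Fil}_H)$, mapping to the Du Bois complex by \cite{BhattDDR}*{Prop. 5.2};
\item for (2), $(\Omega_X^\bullet,F)$;
\item for \itemref{itm-InjectivePre}, a filtered complex whose graded pieces are $\cH^0(\underline{\Omega}_X^p)[-p]$. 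Concretely, I would take the subcomplex of $\underline{\Omega}_X^\bullet$ obtained by truncating each graded piece, for instance via a canonical truncation with respect to the filtration, or via the filtered morphism $\Omega_X^\bullet \to \underline{\Omega}_X^\bullet$ factored through $\cH^0$.
\end{itemize}
All statements are local, so we may work on affine $Y$ with $X \subseteq Y$ and regard these as filtered differential complexes on $Y$, with $\cM^\bullet$ underlying $\iota_*\Q_X^H$.

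Set $\cM_2^\bullet = {\rm DR}^{-1}\mathbb D(\widetilde{\rm DR}(\cM^\bullet,F)) \simeq \mathbf D(\cM^\bullet,F)$ and $\cM_1^\bullet = {\rm DR}^{-1}\mathbb D(\cC^\bullet_1)$, with $\phi = {\rm DR}^{-1}\mathbb D(\varphi)$. Both filtrations are bounded below uniformly, since all the complexes satisfy $F_0 = $ everything in increasing indexing. We check the two hypotheses of \lemmaref{lem-Technical}, with $k$ shifted to match the index $p \mapsto -p$ (up to the normalization). The reasoning rests on the equivalence in \corollaryref{cor-DoubleDual}.

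\textbf{Hypothesis (1).} The assumption that ${\rm Gr}^p\varphi$ is a quasi-isomorphism for $p \leq k-1$ means that $\cC_1/F^{k} \to \underline{\Omega}_X^\bullet/F^{k}$ is a quasi-isomorphism. In cases (1)--(2) this is exactly the pre- or strict-$(k-1)$-Du Bois hypothesis, and in case \itemref{itm-Cotangent} it uses ${\rm Gr}_H^p = \bigwedge^pL_{X/\C}[-p]$. \corollaryref{cor-DoubleDual} then gives quasi-isomorphisms on the corresponding $F_p\mathbb D$. Applying $\widetilde{\rm DR}$ and using \eqref{eq-filtqisDRDRInv}, this yields hypothesis (1) of \lemmaref{lem-Technical}.

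\textbf{Hypothesis (2).} We need injectivity of $\cH^qF_p\cM_2 \to \cH^qF_p\cM_1$ for $p \leq k$. By \propositionref{prop-strictnessDual}, the group $\cH^qF_p\mathbf D(\cM^\bullet)$ injects into $\cH^q\mathbf D(\cM^\bullet) \cong \cH^{q+\dim Y}_X(\omega_Y)$. Exactly as in the proof of \theoremref{thm-HodgeInExt}, the map on total cohomology $\cH^q\cM_2 \to \cH^q\cM_1$ is an isomorphism. This follows from \lemmaref{lem-endConstant} together with the fact that it is the identity on $Y\setminus X_{\rm sing}$; for $\widehat{\rm dR}_X$ and the truncated complexes this holds because the map factors the corresponding map for $\Omega^H_{X/\C}$ or is generically the identity. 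The commutative square
\[ \cH^qF_p\cM_2 \to \cH^qF_p\cM_1 \to \cH^q\cM_1 \]
then shows that the first arrow is injective, since the composite is injective.

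\textbf{Conclusion.} \lemmaref{lem-Technical} now gives injectivity of $\cH^\ell{\rm Gr}^F_k\widetilde{\rm DR}(\cM_2) \to \cH^\ell{\rm Gr}^F_k\widetilde{\rm DR}(\cM_1)$. By \lemmaref{lem-GrDual} and \eqref{eq-filtqisDRDRInv}, these graded pieces are $R\cH om_{\cO_Y}({\rm Gr}^{k}\underline{\Omega}_X^\bullet,\omega_Y[\dim Y])$ and $R\cH om_{\cO_Y}({\rm Gr}^{k}\cC_1,\omega_Y[\dim Y])$. By Grothendieck duality for $\iota$, these become $\mathbb D^{\rm coh}_X(\underline{\Omega}^k_X)$ and $\mathbb D^{\rm coh}_X({\rm Gr}^k\cC_1[k])$, up to a common shift. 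This gives the stated injectivity of Ext sheaves for every $q$.

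\textbf{Main obstacle.} I expect the main difficulty to be case \itemref{itm-InjectivePre}, together with the bookkeeping needed to make $\widehat{\rm dR}_X$ fit the framework. For \itemref{itm-InjectivePre} one must construct a filtered differential complex mapping to $\underline{\Omega}^\bullet_X$ with graded pieces $\cH^0(\underline{\Omega}_X^p)[-p]$. The natural candidate is the filtered subcomplex whose $p$-th graded piece is $\tau_{\leq p}$. One has to check that it is compatible with the filtered differentials and still has the right total cohomology, so that $\chi$ is an isomorphism generically; I would try the Du Bois--type truncation $\tau$ on each ${\rm Gr}$ via the filtered derived category. For \itemref{itm-Cotangent}, the point to verify is that $\widehat{\rm dR}_X$ is a filtered differential complex on $Y = {\rm Spec}(P_0)$, via \propositionref{prop-Construction2}, with coherent graded pieces, so that \corollaryref{cor-DoubleDual} applies.
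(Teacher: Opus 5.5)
Your architecture matches the paper's: \lemmaref{lem-Technical} applied to ${\rm DR}^{-1}\mathbb D$ of a filtered map into $(\underline{\Omega}_X^\bullet,F)$, \corollaryref{cor-DoubleDual} for hypothesis (1), strictness (\propositionref{prop-strictnessDual}) for hypothesis (2), and \lemmaref{lem-GrDual} to read off the Ext sheaves.

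\textbf{The gap in hypothesis (2).} You assert that $\cH^q\cM_2\to\cH^q\cM_1$ is an isomorphism for every source $\cC_1$. That is not available.
\begin{itemize}
\item For $(\Omega_X^\bullet,F)$, the underlying complex need not compute $\C_{X^{\rm an}}$.
\item For $\widehat{\rm dR}_X$, the claim is precisely the open question the paper raises about $R\Gamma_X(\omega_Y)\to\cE^\bullet_{\rm der}$. ``Generically the identity'' does not help, since \lemmaref{lem-endConstant} only concerns endomorphisms of $R\Gamma_X(\omega_Y)$.
\end{itemize}
What you actually need is injectivity. For sources lying between $\Omega^H_{X/\C}$ and $\underline{\Omega}_X^\bullet$ this is automatic, because the isomorphism $\chi$ from the proof of \theoremref{thm-HodgeInExt} factors through $\cH^q\cM_1$.

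But $\widehat{\rm dR}_X$ maps \emph{to} $\Omega^H_{X/\C}$, so that factorization runs the wrong way. You would need $\cH^q{\rm DR}^{-1}\mathbb D(\Omega^H_{X/\C})\to\cH^q{\rm DR}^{-1}\mathbb D(\widehat{\rm dR}_X)$ to be injective, and nothing in your argument gives this. The paper supplies it with a retraction:
\begin{itemize}
\item Dualizing $(\Omega_Y^\bullet,F)\to(\widehat{\rm dR}_X,{\rm Fil}_H)$ gives a map $\cE^\bullet_{\rm der}\to\omega_Y$.
\item Every graded piece of $\cE^\bullet_{\rm der}$ is supported on $X$, so \lemmaref{lem-adjunction} factors this map through $R\Gamma_X(\omega_Y)$.
\item The composite $R\Gamma_X(\omega_Y)\to\cE^\bullet_{\rm der}\to R\Gamma_X(\omega_Y)$ is nonzero on each component, hence an automorphism by \lemmaref{lem-endConstant}.
\end{itemize}
So $R\Gamma_X(\omega_Y)$ is a direct summand of $\cE^\bullet_{\rm der}$. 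With this, the single chain $\widehat{\rm dR}_X\to\Omega^H_{X/\C}\to\cdots\to\underline{\Omega}_X^\bullet$ makes every map out of $\cH^qF_p{\rm DR}^{-1}\mathbb D(\underline{\Omega}_X^\bullet,F)$ injective at once.

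\textbf{The gap in \itemref{itm-InjectivePre}.} You never actually produce the source complex. The criterion you set for it (``the right total cohomology, so that $\chi$ is an isomorphism generically'') is also the wrong target. By the above, all that matters is that the map from $(\Omega^H_{X/\C},{\rm Fil}_{\rm inf})$ to $\underline{\Omega}_X^\bullet$ factors through it.

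The paper uses $\Omega^\bullet_{X,h}=[\cH^0(\underline{\Omega}_X^0)\to\cH^0(\underline{\Omega}_X^1)\to\cdots]$ with the stupid filtration. By Huber--J\"order this is a DGA receiving $\Omega_Y^\bullet$, so \propositionref{prop-Construction2} makes it a filtered differential complex. It sits in the chain $\Omega_X^\bullet\to\Omega_{X,h}^\bullet\to\underline{\Omega}_X^\bullet$. The same proposition is also what you need for $(\Omega_X^\bullet,F)$ and $(\widehat{\rm dR}_X,{\rm Fil}_H)$.

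Your truncation idea can also be completed. For a filtered-differential model $(K,F^\bullet)$ of $(\underline{\Omega}_X^\bullet,F^\bullet)$, put $Z^n=\{x\in F^nK^n : dx\in F^{n+1}K^{n+1}\}$ with the induced filtration. One checks:
\begin{itemize}
\item ${\rm Gr}^pZ=\tau_{\leq p}{\rm Gr}^p_FK\simeq\cH^0(\underline{\Omega}_X^p)[-p]$.
\item $Z$ is stable under the components $\frac{1}{\alpha!}{\rm ad}(y)^\alpha(d)$ of ${\rm DR}^{-1}(d)$; expand ${\rm ad}(y)^\alpha(d\circ d)=0$. Hence $Z$ is a filtered differential subcomplex.
\item $Z$ is functorial, preserves filtered quasi-isomorphisms, and is the identity on $(\Omega^H_{X/\C},{\rm Fil}_{\rm inf})$.
\end{itemize}
The last point gives the required factorization.
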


\begin{rmk}
    The morphisms in \theoremref{thm-Injectivity2}(\ref{thm-symbolic}) and \theoremref{thm-Injectivity2}(\ref{thm-strong}) are studied in an upcoming article where the authors propose new higher singularity classes related to these constructions \cite{CDOHigher}.
\end{rmk}

\begin{rmk} \label{rmk-StupidlyFiltered} We have noted that $(\underline{\Omega}_X^\bullet,F)$, $(\Omega_{X/\C}^H, {\rm Fil}_{\rm inf}^\bullet)$, and $(\Omega_{X/\C}^H, {\rm Fil}_{\rm symbinf}^\bullet)$ give examples of filtered differential complexes. 

In Appendix \ref{sec-appFDC}, we show that the complex of K\"{a}hler differentials $(\Omega_X^\bullet,F)$ (considered as a complex on $Y$), with its stupid filtration, is another example. 

Moreover, if we form the complex (also considered in \cites{SVV,PSV,KovacsInjectivity})
\[ \Omega_{X,h}^\bullet = \left[\cH^0(\underline{\Omega}_X^0) \xrightarrow[]{d}\cH^0(\underline{\Omega}_X^1) \xrightarrow[]{d} \cdots\xrightarrow[]{d} \cH^0(\underline{\Omega}_X^{\dim X})\right],\]
via the connecting morphisms in the triangle
\[ F^{p+1}\underline{\Omega}_X^\bullet/F^{p+2}\underline{\Omega}_X^\bullet  \to F^p\underline{\Omega}_X^\bullet/F^{p+2}\underline{\Omega}_X^\bullet \to F^p\underline{\Omega}_X^\bullet/F^{p+1}\underline{\Omega}_X^\bullet \xrightarrow[]{+1},\]
then this gives another stupidly filtered differential complex.
\end{rmk}

\begin{lem} Let $(\cE^\bullet_{\rm der}, {\rm Fil}^H_{\bullet}) = {\rm DR}^{-1} \mathbb D(\widehat{\rm dR}_X, {\rm Fil}^H_\bullet)[-\dim Y]$. Then the filtered morphisms
\[ (\Omega_Y^\bullet,F_\bullet) \to (\widehat{\rm dR}_X, {\rm Fil}^H_\bullet) \to (\underline{\Omega}_X^\bullet,F_\bullet)\]
induce morphisms (after forgetting the filtrations)
\[ R\Gamma_X(\omega_Y) \to \cE^\bullet_{\rm der} \to R\Gamma_X(\omega_Y) \to \omega_Y.\]

These morphisms exhibit $R\Gamma_X(\omega_Y)$ as a direct summand of $\cE^\bullet_{\rm der}$ in the derived category of $\cD$-modules.
\end{lem}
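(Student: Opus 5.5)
We apply the contravariant functor ${\rm DR}^{-1}\mathbb D(-)[-\dim Y]$ to the chain of filtered differential morphisms
\[ (\Omega_Y^\bullet,F_\bullet) \to (\widehat{\rm dR}_X,{\rm Fil}^H_\bullet) \to (\underline{\Omega}_X^\bullet,F_\bullet), \]
where the first arrow comes from Remark \ref{rmk-FunctorialSmoothEmbed} and the second from \cite{BhattDDR}*{Prop. 5.2}. We then forget the filtrations. This produces morphisms in $D(\cD_Y)$
\[ {\rm DR}^{-1}\mathbb D(\underline{\Omega}_X^\bullet)[-\dim Y] \to \cE^\bullet_{\rm der} \to {\rm DR}^{-1}\mathbb D(\Omega_Y^\bullet)[-\dim Y]. \]
We identify the outer terms as follows.

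\textbf{The left term.} As in the proof of \theoremref{thm-HodgeInExt}, \theoremref{thm-DuBoisCompare} together with the commutation ${\rm DR}^{-1}\circ\mathbb D\simeq \mathbf D\circ {\rm DR}^{-1}$ identifies it with the underlying $\cD_Y$-complex of $\iota_*\mathbf D(\Q_X^H)$ (shifted), that is, with $R\Gamma_X(\omega_Y)$.

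\textbf{The right term.} Since $\Omega_Y^\bullet=\widetilde{\rm DR}(\cO_Y,F)[-\dim Y]$, \propositionref{prop-DualOfDR} (or directly self-duality of $\cO_Y$) gives $\omega_Y$, with no filtration needed.

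\textbf{The middle arrow.} To get the map $\cE^\bullet_{\rm der}\to R\Gamma_X(\omega_Y)$, we dualize $\widehat{\rm dR}_X\to \Omega^H_{X/\C}$ (the far-left map in \eqref{eq-mainDiagram}) and use \theoremref{thm-InterpretExt}. Equivalently, we note that $\cE^\bullet_{\rm der}$ is supported on $X$, since $\widehat{\rm dR}_X$ is the completion along $X$ and its dual consists of $\varinjlim$ of ${\cH}om$'s out of sheaves supported on $X$. The map to $\omega_Y$ then factors through $R\Gamma_X(\omega_Y)$ by \lemmaref{lem-adjunction}. Consequently, the composite $R\Gamma_X(\omega_Y)\to\cE^\bullet_{\rm der}\to R\Gamma_X(\omega_Y)$ is the dual of the composite of filtered morphisms
\[ \widehat{\rm dR}_X \to \Omega^H_{X/\C}\to\underline{\Omega}_X^\bullet. \]

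\textbf{The splitting.} It remains to show this endomorphism of $R\Gamma_X(\omega_Y)$ is an isomorphism; then $R\Gamma_X(\omega_Y)$ is a direct summand of $\cE^\bullet_{\rm der}$. By \lemmaref{lem-endConstant}, it suffices to check the endomorphism is nonzero on each connected component of $X$. For this we restrict to the smooth locus $X\setminus X_{\rm sing}$, where Bhatt's comparison maps \cite{BhattDDR}*{Rmk. 5.3} are identities, as in the last step of the proof of \theoremref{thm-HodgeInExt}. On that locus the composite is the identity of $R\Gamma_X(\omega_Y)$, so the endomorphism is nonzero on every component.

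The main obstacle is making the middle step precise: showing that the dual of the derived de Rham complex really receives a natural map compatible with the previous construction. Concretely, the composite through $\Omega^H_{X/\C}$ must agree with the map coming from $\underline{\Omega}_X^\bullet$, and the whole construction must be compatible with restriction to open subsets. We would handle this by working with the explicit model of $\widehat{\rm dR}_X$ and using that the filtered dual commutes with restriction to opens of $Y$.
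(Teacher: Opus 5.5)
Your overall route is the paper's: identify the outer duals, use support on $X$ and \lemmaref{lem-adjunction} to get $\cE^\bullet_{\rm der}\to R\Gamma_X(\omega_Y)$, and finish with \lemmaref{lem-endConstant}. However, the middle step is justified incorrectly, and your splitting argument depends on that justification. Since ${\rm DR}^{-1}\mathbb D(-)$ is contravariant, dualizing $\widehat{\rm dR}_X\to\Omega^H_{X/\C}$ gives a map $R\Gamma_X(\omega_Y)\simeq{\rm DR}^{-1}\mathbb D(\Omega^H_{X/\C})[-\dim Y]\to\cE^\bullet_{\rm der}$, not a map out of $\cE^\bullet_{\rm der}$. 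So your two constructions of the middle arrow are not equivalent, and only the adjunction one works. For the same reason, the dual of $\widehat{\rm dR}_X\to\Omega^H_{X/\C}\to\underline{\Omega}_X^\bullet$ is not an endomorphism of $R\Gamma_X(\omega_Y)$; it is the first arrow $a\colon R\Gamma_X(\omega_Y)\to\cE^\bullet_{\rm der}$ itself. The second arrow $b$ comes from \lemmaref{lem-adjunction} and is not the dual of any morphism of de Rham complexes. Restricting to the smooth locus and quoting \cite{BhattDDR}*{Rmk. 5.3} therefore says nothing about $b\circ a$ until $b$ is tied to something you can compute.

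The missing step is the uniqueness in \lemmaref{lem-adjunction}. By construction $\mathrm{can}\circ b=c$, where $c\colon\cE^\bullet_{\rm der}\to\omega_Y$ is the dual of $\Omega_Y^\bullet\to\widehat{\rm dR}_X$. Hence $\mathrm{can}\circ(b\circ a)=c\circ a$ is the dual of the composite $\Omega_Y^\bullet\to\widehat{\rm dR}_X\to\underline{\Omega}_X^\bullet$, i.e.\ of the natural map $\Omega_Y^\bullet\to\underline{\Omega}_X^\bullet$. Under Saito's identification this is the canonical map $R\Gamma_X(\omega_Y)\to\omega_Y$ (dual to $\Q^H_Y\to\iota_*\Q^H_X$), which is nonzero near every component of $X$. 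Applying \lemmaref{lem-adjunction} to $R\Gamma_X(\omega_Y)$ itself, $\chi\mapsto\mathrm{can}\circ\chi$ is a bijection ${\rm End}(R\Gamma_X(\omega_Y))\to{\rm Hom}(R\Gamma_X(\omega_Y),\omega_Y)$ compatible with restriction to opens. So $b\circ a$ is nonzero on each component, and \lemmaref{lem-endConstant} makes it an automorphism. With this argument, $\Omega^H_{X/\C}$ plays no role and your ``main obstacle'' disappears.

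Your support argument also needs repair. The terms $\prod_{s-\ell=j}\Omega^s_{P_\ell/\C}$ of $\widehat{\rm dR}_X$, and those of its Hodge quotients, are not supported on $X$. What is true, and what the paper uses, is that ${\rm Gr}_H^p\widehat{\rm dR}_X\simeq\bigwedge^pL_{X/\C}[-p]$ has cohomology supported on $X$. By \lemmaref{lem-GrDual}, each graded piece of $\cE^\bullet_{\rm der}$ then restricts to zero on $Y\setminus X$. Since the filtration is exhaustive and bounded below, $j^*\cE^\bullet_{\rm der}=0$.
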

\begin{proof} 
Note that ${\rm DR}^{-1} \mathbb D(\Omega_Y^\bullet,F) \simeq \omega_Y[\dim Y]$, as $Y$ is smooth. Thus, we get the composition
\[ R\Gamma_X(\omega_Y) \to \cE^\bullet_{\rm der} \to \omega_Y.\]

But then $\cE^\bullet_{\rm der}$ is set-theoretically supported on $X$, because each associated graded piece (of the exhaustive and bounded below filtration) ${\rm Gr}^F_p \cE^\bullet_{\rm der}$ is, so by \lemmaref{lem-adjunction} we get the factorization in the statement. The last claim follows by applying \lemmaref{lem-endConstant}.
\end{proof}

\begin{proof}[Proof of \theoremref{thm-Injectivity2}] We have the morphisms of filtered differential complexes
\[ (\Omega_{X/\C}^H,{\rm Fil}_{\rm inf}) \to (\Omega_{X/\C}^H,{\rm Fil}_{\rm symbinf}) \to (\Omega_X^\bullet,F) \to (\Omega_{X,h}^\bullet,F) \to (\underline{\Omega}_X^\bullet,F)\]
where $(\Omega_X^\bullet,F)$ and $(\Omega_{X,h}^\bullet,F)$ are given the stupid filtration.

The proof is organized by the following diagram.
\[ \begin{tikzcd} \cH^q F_p {\rm DR}^{-1} \mathbb D (\underline{\Omega}_X^\bullet,F_\bullet) \ar[r,"\cong"] \ar[d] & F_p \cH^q {\rm DR}^{-1} \mathbb D (\underline{\Omega}_X^\bullet,F_\bullet) \ar[r,hook] \ar[d] & \cH^q {\rm DR}^{-1} \mathbb D(\underline{\Omega}_X^\bullet) \ar[d] \\ 
\cH^q F_p {\rm DR}^{-1}\mathbb D(\Omega_{X,h}^\bullet,F_\bullet) \ar[r,two heads] \ar[d] & F_p \cH^q {\rm DR}^{-1}\mathbb D(\Omega_{X,h}^\bullet,F_\bullet) \ar[r,hook] \ar[d] & \cH^q {\rm DR}^{-1} \mathbb D(\Omega_{X,h}^\bullet) \ar[d] \\  \cH^q F_p {\rm DR}^{-1} \mathbb D(\Omega_X^\bullet,F_\bullet)  \ar[r,two heads] \ar[d] & F_p \cH^q {\rm DR}^{-1} \mathbb D(\Omega_X^\bullet,F_\bullet) \ar[r,hook] \ar[d] & \cH^q {\rm DR}^{-1} \mathbb D(\Omega_X^\bullet)  \ar[d] \\ 
\cH^q F_p {\rm DR}^{-1} \mathbb D(\Omega_{X/\C}^H,{\rm Fil}^{\rm symbinf}_\bullet) \ar[r,two heads] \ar[d] & F_p \cH^q {\rm DR}^{-1} \mathbb D(\Omega_{X/\C}^H,{\rm Fil}^{\rm symbinf}_\bullet)\ar[r,hook] \ar[d] & \cH^q {\rm DR}^{-1}  \mathbb D(\Omega_{X/\C}^H) \ar[d] \\ 
\cH^q F_p {\rm DR}^{-1} \mathbb D(\Omega_{X/\C}^H,{\rm Fil}^{\rm inf}_\bullet) \ar[r,two heads] \ar[d] & F_p \cH^q {\rm DR}^{-1} \mathbb D(\Omega_{X/\C}^H,{\rm Fil}^{\rm inf}_\bullet)\ar[r,hook]\ar[d] & \cH^q {\rm DR}^{-1}  \mathbb D(\Omega_{X/\C}^H) \ar[d] \\ 
\cH^q F_p {\rm DR}^{-1} \mathbb D(\widehat{\rm dR}_X,{\rm Fil}^{H}_\bullet) \ar[r,two heads] & F_p \cH^q {\rm DR}^{-1} \mathbb D(\widehat{\rm dR}_X,{\rm Fil}^{H}_\bullet) \ar[r,hook] & \cH^q {\rm DR}^{-1} \mathbb D(\widehat{\rm dR}_X) \end{tikzcd} .\]

Here the top left horizontal morphism is an isomorphism by strictness of filtered duality in Saito's theory. We noted above that the morphism from the top right to the bottom right is a split injection. Hence, we conclude that the morphism from the top left object to the bottom right object is injective, too. By commutativity of this diagram, we conclude that \emph{any} morphism in the diagram out of the top left object is injective. In particular, the morphism
\[ \cH^q F_p {\rm DR}^{-1} \mathbb D( \underline{\Omega}_X^\bullet,F) \to \cH^q F_p {\rm DR}^{-1} \mathbb D(\Omega^\bullet_{X,h},F)\]
is injective for all $p,q \in \Z$.

We now prove Statement \eqref{itm-InjectivePre} of \theoremref{thm-Injectivity2}; the remaining statements follow by the same argument.

Assuming $X$ is pre-$(k-1)$-Du Bois is equivalent to assuming that the morphism $(\Omega_{X,h}^\bullet,F) \to (\underline{\Omega}_X^\bullet,F)$ induces a quasi-isomorphism after applying ${\rm Gr}^F_{-p}(-)$ for all $p < k$. Dually, the morphism
\[ (\mathbb D(\underline{\Omega}_X^\bullet),F) \to ( \mathbb D( \Omega_{X,h}^\bullet),F)\]
induces a quasi-isomorphism on the $p$-th graded piece ${\rm Gr}^F_{p}(\mathbb D(\underline{\Omega}_X^\bullet)) \to {\rm Gr}^F_{p}(\mathbb D( \Omega_{X,h}^\bullet))$ and also a quasi-isomorphism $F_{p}\mathbb D(\underline{\Omega}_X^\bullet) \to F_{p}\mathbb D( \Omega_{X,h}^\bullet)$ by \corollaryref{cor-DoubleDual}. It therefore remains to prove that the induced morphism
\[ \cH^q {\rm Gr}^F_k (\mathbb D(\underline{\Omega}_X^\bullet)) \to \cH^q {\rm Gr}^F_k ( \mathbb D( \Omega_{X,h}^\bullet))\]
is injective for all $q\in \Z$.

The natural morphism
\[ \widetilde{\rm DR}{\rm DR}^{-1} \mathbb D(-) \to \mathbb D(-)\]
is a filtered quasi-isomorphism by~\eqref{eq-filtqisDRDRInv}; hence it suffices to prove that the morphism
\[ \cH^q {\rm Gr}^F_k \widetilde{\rm DR}{\rm DR}^{-1} (\mathbb D(\underline{\Omega}_X^\bullet)) \to \cH^q {\rm Gr}^F_k \widetilde{\rm DR}{\rm DR}^{-1} ( \mathbb D( \Omega_{X,h}^\bullet))\]
is injective for all $q\in \Z$. Once again using the filtered quasi-isomorphism, the assumption that $X$ is pre-$(k-1)$-Du Bois is equivalent to the property that
\[{\rm Gr}^F_p \widetilde{\rm DR}{\rm DR}^{-1} (\mathbb D(\underline{\Omega}_X^\bullet)) \to {\rm Gr}^F_p \widetilde{\rm DR}{\rm DR}^{-1} ( \mathbb D( \Omega_{X,h}^\bullet))\]
is a quasi-isomorphism for all $p < k$. By setting
\[ (\cM_2^\bullet,F) = {\rm DR}^{-1} (\mathbb D(\underline{\Omega}_X^\bullet),F),\]
\[ (\cM_1^\bullet,F) = {\rm DR}^{-1} (\mathbb D(\Omega_{X,h}^\bullet),F),\]
we see that we are exactly in the setting of \lemmaref{lem-Technical} above, which concludes the proof.
\end{proof}

In the case of $\Omega_{X/\C}^H$, with either its infinitesimal or symbolic infinitesimal filtration, the resulting complex of $\cD$-modules was indeed quasi-isomorphic to local cohomology, as was shown by direct computation. Moreover, the induced filtration on local cohomology is identified by \theoremref{thm-InterpretExt}. Bhatt shows that $\widehat{\rm dR}_X$, ignoring filtrations, is quasi-isomorphic to $\Omega_{X/\C}^H$, and so it is natural to wonder whether the same is true after taking the filtered dual. 
\begin{question} Is the natural map 
\[R\Gamma_X(\omega_Y) \to \cE^\bullet_{\rm der}\]
actually a quasi-isomorphism?
\end{question}

\begin{rmk} Under certain finiteness and holonomicity hypotheses, the answer to the above question would be positive by \cite{SaitoMHP}*{Prop. 2.4.12}. However, we cannot verify those conditions for the derived de Rham complex.
\end{rmk}

In any case, the morphism
\[ \cE^\bullet_{\rm der} \to R\Gamma_X(\omega_Y)\]
induces the filtration 
\[ E^{\rm der}_{p-\dim Y} \cH^q_X(\omega_Y) = {\rm im}\left(\cH^q {\rm Fil}^H_{p} \cE^\bullet_{\rm der} \to \cH^q_X(\omega_Y)\right).\] By construction, we have the chain of containments
\[ F_\bullet \cH^q_X(\omega_Y) \subseteq E_\bullet^{\rm symb} \cH^q_X(\omega_Y) \subseteq E_\bullet \cH^q_X(\omega_Y) \subseteq E_\bullet^{\rm der} \cH^q_X(\omega_Y). \]

\begin{question} Does the filtration $E_\bullet^{\rm der} \cH^q_X(\omega_Y)$ admit a more direct description?
\end{question}

\appendix 
\section{Construction of Filtered Differential Complexes} \label{sec-appFDC}
In the appendix, we provide a proof that certain filtered complexes are indeed filtered differential complexes in the sense of Saito.

We use the DGA (differential graded algebra) structure of the objects involved. The DGA structure on $\Omega_{X}^\bullet$ is classical, whereas that on $\Omega_{X,h}^\bullet$ follows immediately from Huber-J\"{o}rder's description of the $h$-differentials \cite{HuberJorder}*{Thm. 1}. We give a general construction which applies to either of these.

Let $(M,F,d,\wedge)$ be a filtered DGA on the smooth variety $Y$ with a filtered morphism $(\Omega_Y^\bullet,F,d,\wedge) \to (M,F,d,\wedge)$. Here we use increasing indices, so that, for example,
\[ F_{-j} \Omega_Y^p = \begin{cases} 0 & j > p \\ \Omega_Y^p & j \leq p \end{cases}.\]

By a filtered DGA, we mean $d\colon (M^i,F) \to (M^{i+1},F)$ is filtered and satisfies the graded Leibniz rule
\[ d(m_0 \wedge m_1) = d(m_0) \wedge m_1 + (-1)^{\deg(m_0)} m_0 \wedge d(m_1),\]
and
\[ (F_j M) \wedge (F_\ell M) \subseteq F_{j+\ell} M.\]

In particular, for all $\alpha \in \Omega_Y^1$ and $m\in F_j M$, we have
\begin{equation} \label{eq-wedgeDropHodge} m \wedge [\alpha] \in F_{j-1} M.\end{equation}

\begin{eg} If $M$ is any DGA, then $(M,F)$ where $F$ is the stupid filtration, so that $F_{-j}M^p = \begin{cases} 0 & j > p \\ M^p & j \leq p\end{cases}$, is a filtered DGA.
\end{eg}

\begin{prop} \label{prop-Construction2} If $(\Omega_Y^\bullet,F,\wedge,d) \to (M^\bullet,F,\wedge,d)$ is a morphism of filtered DGAs, then the complex
\[ \cdots \xrightarrow[]{d} (M^{-1},F) \xrightarrow[]{d} (M^0,F) \xrightarrow[]{d} (M^1,F) \xrightarrow[]{d} \cdots\]
is a filtered differential complex.
\end{prop}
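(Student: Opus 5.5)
Since the differentials $d\colon M^i\to M^{i+1}$ are only $\C$-linear, the plan is to build, for each $i$, a filtered $\cD_Y$-linear morphism
\[ \Phi\colon (M^i,F)\otimes_{\cO_Y}(\cD_Y,F)\to (M^{i+1},F)\otimes_{\cO_Y}(\cD_Y,F) \]
with ${\rm DR}(\Phi)=d$. We work locally with coordinates $y_1,\dots,y_n$ on $Y$ and vector fields $\partial_1,\dots,\partial_n$. The key input is the Leibniz rule together with the $\Omega_Y^\bullet$-module structure on $M$ coming from the DGA morphism. For $f\in\cO_Y$ and $m\in M^i$ this gives
\[ d(fm)=f\,dm+[df]\wedge m=f\,dm+\sum_j \partial_j(f)\,[dy_j]\wedge m. \]
So $d$ is a first-order differential operator whose symbol is built from the operators $m\mapsto[dy_j]\wedge m$, which are $\cO_Y$-linear.

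Guided by this, define $\Phi$ on generators (right modules, with ${\rm DR}$ given by $m\otimes P\mapsto m\cdot P$, so $m\otimes\partial\mapsto -\partial$-action on a right module). Concretely, set
\[ \Phi(m\otimes 1)=dm\otimes 1 \pm\sum_j ([dy_j]\wedge m)\otimes\partial_j, \]
with the sign fixed so that passing to $M^{i+1}\otimes_{\cD_Y}\cO_Y$ returns $dm$. Then extend $\Phi$ right $\cD_Y$-linearly. One checks that this is well defined on $M^i\otimes_{\cO_Y}\cD_Y$, i.e.\ that $\Phi(fm\otimes 1)=\Phi(m\otimes f)$. Expanding $\Phi(m\otimes 1)\cdot f$ using $\partial_j f=f\partial_j+\partial_j(f)$ produces the extra term $\pm\sum_j([dy_j]\wedge m)\otimes\partial_j(f)$. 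This term matches exactly the correction $[df]\wedge m$ in $d(fm)$, using $\cO_Y$-linearity of $[dy_j]\wedge-$ and $[df]=\sum_j\partial_j(f)[dy_j]$, which holds because $\Omega_Y^\bullet\to M$ is a DGA map. The same consideration shows the definition is independent of coordinates, so the local $\Phi$ glue. Alternatively, uniqueness from Saito's injectivity \cite{SaitoMHP}*{Lem. 2.2.2} gives the gluing directly, since ${\rm DR}$ is injective.

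It remains to check that $\Phi$ is filtered. If $m\in F_pM^i$, then $dm\in F_pM^{i+1}$ because $d$ is filtered, so $dm\otimes1\in F_p$. By \eqref{eq-wedgeDropHodge}, $[dy_j]\wedge m\in F_{p-1}M^{i+1}$, and $\partial_j\in F_1\cD_Y$, so the second term also lies in $F_{p-1+1}=F_p$. Since the filtration on the induced module is the convolution of the filtrations, this shows $\Phi$ is filtered.

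The main obstacle is getting the signs and side conventions right. One must pin down the identification ${\rm DR}(\Phi)=d$ for right modules (the antipode on $\partial_j$) and fix the graded sign in $[dy_j]\wedge m$ versus $m\wedge[dy_j]$, so that the cocycle check above closes. I would fix these by first testing the case $M=\Omega_Y^\bullet$, where $\Phi$ must reproduce the usual filtered de Rham complex $\widetilde{\rm DR}(\omega_Y)$. Note that no compatibility $\Phi_{i+1}\circ\Phi_i=0$ is needed separately: it follows from $d^2=0$ and the injectivity of ${\rm DR}$.
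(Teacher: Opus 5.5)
Your proposal is correct and is essentially the paper's proof. The paper lifts $d$ to $\widetilde{\nabla}(m\otimes P)=dm\otimes P+\sum_i(-1)^p\, m\wedge[dy_i]\otimes\partial_{y_i}P$, which is your $\Phi$ with sign $+$ (for graded-commutative $M$ one has $(-1)^p m\wedge[dy_i]=[dy_i]\wedge m$); it then checks filteredness via \eqref{eq-wedgeDropHodge} and observes that $\widetilde{\rm DR}$ recovers $d$.

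One small correction: the $\partial_j$-terms vanish under $M^{i+1}\otimes_{\cO_Y}\cD_Y\otimes_{\cD_Y}\cO_Y\cong M^{i+1}$ regardless of sign, because $\partial_j(1)=0$. So the sign is pinned down only by your well-definedness check $\Phi(fm\otimes 1)=\Phi(m\otimes 1)f$, which forces it to be $+$.
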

\begin{proof}
Indeed, we have the natural morphism $\Omega_Y^1 \to M^1$ and so, for any $p \in \Z$, we have the composition
\[ M^p \otimes_{\cO_Y} \Omega_Y^1 \to M^p \otimes_{\cO_Y} M^1 \xrightarrow[]{\wedge} M^{p+1}\]
which gives a natural morphism
\[ \widetilde{\nabla} \colon M^p \otimes_{\cO_Y} \cD_Y \to M^{p+1} \otimes_{\cO_Y} \cD_Y.\]

In local coordinates $y_1,\dots, y_n$ on $Y$, this map is given by
\[ \widetilde{\nabla}(m \otimes P) = d(m) \otimes P + \sum_{i=1}^n (-1)^p m \wedge [dy_i] \otimes (\de_{y_i} P),\]
where $[dy_i]$ is the image of $dy_i$ in $M^1$. It is clear that this is right $\cD_Y$-linear and $\widetilde{\nabla}^2 = 0$. Keeping the filtration in mind, we get a complex of filtered right $\cD_Y$-modules
\[ \cdots \to (M^{-1},F) \otimes_{\cO_Y} (\cD_Y,F) \xrightarrow[]{\widetilde{\nabla}}  (M^0,F) \otimes_{\cO_Y} (\cD_Y,F) \xrightarrow[]{\widetilde{\nabla}} (M^1,F) \otimes_{\cO_Y} (\cD_Y,F)\xrightarrow[]{\widetilde{\nabla}} \cdots. \]

Indeed, we can easily see that it is filtered by \eqref{eq-wedgeDropHodge}. Using the fact that $\partial_{y_i}(1) = 0$ for all $1\leq i\leq \dim Y$, we see that after applying the functor $\widetilde{\rm DR}(-)$ we recover the complex $((M^\bullet,F),d)$, proving the claim.
\end{proof}

\bibliography{bibliography}

\begin{bibdiv}
\begin{biblist}

\bib{BdJ}{article}{
      author={Bhatt, Bhargav},
      author={de~Jong, Aise~Johan},
       title={Crystalline cohomology and de {R}ham cohomology},
        date={2011},
     journal={preprint arXiv: 1110.5001},
}

\bib{BhattDDR}{article}{
      author={Bhatt, Bhargav},
       title={Completions and derived de {R}ham cohomology},
        date={2012},
     journal={preprint arXiv: 1207.6193},
}

\bib{CDOHigher}{article}{
      author={Chen, Qianyu},
      author={Dirks, Bradley},
      author={Olano, Sebasti\'{a}n},
       title={Higher singularities through local cohomology},
        date={2026},
}

\bib{DeligneDimca}{article}{
      author={Deligne, Pierre},
      author={Dimca, Alexandru},
       title={Filtrations de {H}odge et par l'ordre du p{$\^{o}$}le pour les
  hypersurfaces singuli\`{e}res},
        date={1990},
     journal={Ann. Sci. {\'{E}}c. Norm. Sup\'{e}r.},
      volume={23},
      number={4},
       pages={645\ndash 656},
}

\bib{Eisenbud}{book}{
      author={Eisenbud, David},
       title={Commutative {A}lgebra with a {V}iew {To}ward {Algebraic}
  {Geometry}},
     edition={1},
      series={Graduate Texts in Mathematics},
   publisher={Springer New York, NY},
        date={1995},
}

\bib{EMS}{article}{
      author={Eisenbud, David},
      author={{Musta\c{t}\u{a}}, Mircea},
      author={Stillman, Mike},
       title={Cohomology on {T}oric {V}arieties and {L}ocal {C}ohomology with
  {M}onomial {S}upports},
        date={2000},
     journal={J. Symbolic Comput.},
      volume={29},
       pages={583\ndash 600},
}

\bib{Golovin}{article}{
      author={Golovin, V.D.},
       title={The global dimension of the sheaf of germs of holomorphic
  functions},
        date={1975},
     journal={Dokl. Akad. Nauk SSSR},
      volume={223},
      number={2},
       pages={273\ndash 275},
}

\bib{GriffithsPole}{article}{
      author={Griffiths, Philip},
       title={On the {P}eriods of {C}ertain {R}ational {I}ntegrals: {I}},
        date={1969},
     journal={Ann. of Math.},
      volume={90},
      number={3},
       pages={460\ndash 495},
}

\bib{HartshorneDR}{article}{
      author={Hartshorne, Robin},
       title={On the {D}e {R}ham cohomology of algebraic varieties},
        date={1975},
        ISSN={0073-8301,1618-1913},
     journal={Inst. Hautes \'Etudes Sci. Publ. Math.},
      number={45},
       pages={5\ndash 99},
  url={http://www.numdam.org.myaccess.library.utoronto.ca/item?id=PMIHES_1975__45__5_0},
      review={\MR{432647}},
}

\bib{HH}{article}{
      author={Hochster, Melvin},
      author={Huneke, Craig},
       title={Comparison of symbolic and ordinary powers of ideals},
        date={2002},
     journal={Invent. Math.},
      volume={147},
       pages={349\ndash 369},
}

\bib{HuberJorder}{article}{
      author={Huber, Annette},
      author={J\"{o}rder, Clemens},
       title={Differential forms in the h-topology},
        date={2014},
     journal={Algebr. Geom.},
      volume={1},
      number={4},
       pages={449\ndash 478},
}

\bib{HTT}{book}{
      author={Hotta, Ryoshi},
      author={Takeuchi, Kiyoshi},
      author={Tanisaki, Toshiyuki},
       title={D-{Modules}, {Perverse} {Sheaves}, and {Representation}
  {Theory}},
   publisher={{Birkha\"{u}ser} Boston},
        date={2008},
}

\bib{DBSaito}{article}{
      author={Jung, Seung-Jo},
      author={Kim, In-Kyun},
      author={Saito, Morihiko},
      author={Yoon, Youngho},
       title={Higher {Du} {Bois} singularities of hypersurfaces},
        date={2022},
     journal={Proc. Lond. Math. Soc.},
      volume={125},
      number={3},
       pages={543\ndash 567},
}

\bib{KovacsInjectivity}{article}{
      author={Kov\'{a}cs, S\'{a}ndor},
       title={Complexes of differential forms and singularities: {T}he
  injectivity theorem},
        date={2025},
     journal={preprint arXiv:2505.09912},
}

\bib{DBDeform}{incollection}{
      author={Kov\'{a}cs, S\'{a}ndor},
      author={Schwede, Karl},
       title={Du {B}ois singularities deform},
        date={2016},
   booktitle={Minimal models and extremal rays (kyoto, 2011)},
      volume={70},
   publisher={Math. Soc. Japan},
       pages={49\ndash 65},
}

\bib{MP3}{article}{
      author={{Musta\c{t}\u{a}}, Mircea},
      author={Popa, Mihnea},
       title={Hodge filtration on local cohomology, {Du} {Bois} complex, and
  local cohomological dimension},
        date={2022},
     journal={Forum Math. Pi},
      volume={10},
      number={e22},
       pages={58 pp.},
}

\bib{MPDB}{article}{
      author={{Musta\c{t}\u{a}}, Mircea},
      author={Popa, Mihnea},
       title={On {$k$-rational} and {$k$-Du Bois} local complete
  intersections},
        date={2025},
     journal={Algebr. Geom.},
      volume={12},
      number={2},
       pages={237\ndash 261},
}

\bib{PSMHS}{book}{
      author={Peters, Chris A.~M.},
      author={Steenbrink, Joseph H.~M.},
       title={Mixed {Hodge} {Structures}},
     edition={1},
      series={H.M. Steenbrink Ergebnisse der Mathematik und ihrer Grenzgebiete.
  3. Folge},
   publisher={Springer Berlin, Heidelberg},
        date={2008},
}

\bib{PSV}{article}{
      author={Popa, Mihnea},
      author={Shen, Wanchun},
      author={Vo, Anh~Duc},
       title={Injectivity and vanishing for the {D}u {B}ois {C}omplexes of
  {I}solated {S}ingularities},
        date={2026},
     journal={Algebr. Number Theory},
      volume={20},
       pages={1235\ndash 1262},
}

\bib{SaitoMHC}{article}{
      author={Saito, Morihiko},
       title={Mixed {Hodge} complexes on algebraic varieties},
        date={2000},
     journal={Math. Ann.},
      volume={316},
       pages={283\ndash 331},
}

\bib{SaitoMHP}{article}{
      author={Saito, Morihiko},
       title={Modules de {Hodge} {Polarisables}},
        date={1988},
     journal={Publ. Res. Inst. Math. Sci.},
      volume={24},
      number={6},
       pages={849\ndash 995},
}

\bib{SaitoDifferentialComplexes}{article}{
      author={Saito, Morihiko},
       title={Induced {$\mathscr{D}$}-modules and differential complexes},
        date={1989},
     journal={Bull. Soc. Math. France},
      volume={117},
      number={3},
       pages={361\ndash 387},
}

\bib{SaitoMHM}{article}{
      author={Saito, Morihiko},
       title={Mixed {Hodge} {Modules}},
        date={1990},
     journal={Publ. Res. Inst. Math. Sci.},
      volume={26},
      number={2},
       pages={221\ndash 333},
}

\bib{SVV}{article}{
      author={Shen, Wanchun},
      author={Venkatesh, Sridhar},
      author={Vo, Anh~Duc},
       title={On {$k$}-{Du} {Bois} and {$k$}-rational singularities},
        date={2023},
     journal={preprint arXiv:2306.03977, to appear in Ann. Inst. Fourier.},
}

\end{biblist}
\end{bibdiv}

\end{document}